\newtheorem{theorem}{Theorem}
\newtheorem{acknowledgement}[theorem]{Acknowledgement}
\newtheorem{lemma}[theorem]{Lemma}
\newtheorem{remark}{Remark}
\newenvironment{proof}[1][Proof]{\noindent\textbf{#1.} }{\ \rule{0.5em}{0.5em}}
\begin{document}

\title{Block Thresholding on the Sphere }
\author{Claudio Durastanti\thanks{%
Research supported by ERC Grant n. 277742 Pascal} \thanks{%
E-mail: durastan@mat.uniroma2.it} \\
University of "Tor Vergata", Rome}
\maketitle

\begin{abstract}
The aim of this paper is to study nonparametric regression estimators on the
sphere based on needlet block thresholding. The block thresholding procedure
proposed here follows the method introduced by Hall, Kerkyacharian and
Picard in \cite{hkp}, \cite{hkp2}, which we modifyto exploit the properties
of spherical needlets. We establish convergence rates, and we show that they
attain adaptivity over Besov balls in the regular region. This work is
strongly motivated by issues arising in Cosmology and Astrophysics,
concerning in particular the analysis of Cosmic rays.

\textbf{AMS classification: }62G08, 62G20, 65T60

\textbf{Keywords: }Block Thresholding, Needlets, Spherical Data,
Nonparametric Regression
\end{abstract}

\section{\protect\bigskip Introduction}

Over the last years, wavelet techniques have been used to achieve remarkable
results in the field of statistics, in particular in the framework of
minimax estimation in nonparametric settings. The pioneering work in this
area was provided by Donoho et al. in \cite{donoho1}, where authors proved
that nonlinear wavelet estimators based on thresholding techniques attain
nearly optimal minimax rates, up to logarithmic terms, for a large class of
unknown density and regression functions. Since then, this research area has
been deeply investigated and extended - we suggest for instance \cite{WASA}
as a textbook reference.\ In this paper, we shall focus on block
thresholding procedure; loosely speaking, this method keeps or annihilates
blocks of wavelet coefficients on each given level (for more details, see 
\cite{WASA}), hence representing an intermediate way between local and
global thresholding, which fix a threshold respectively for each coefficient
and for all of them. Block thresholding was initially suggested in \cite%
{efroim} for orthogonal series estimators and later applied by \cite{hkp}
for both wavelet and kernel density estimation on $\mathbb{R}$ (see also 
\cite{hkp2}); it was also used in \cite{caibloc} in the framework of Oracle
inequalities, while overlapping block thresholding estimators were studied
in \cite{caisilver}. Block thresholding was also applied to study adaptivity
in density estimation in \cite{chickencai}, a data-driven block thresholding
procedure for wavelet regression is investigated in \cite{caizhou}, while
wavelet-based block thresholding rules on maxisets are proposed by \cite%
{autin}. \newline
A huge number of results concerns estimation within the thresholding
paradigm in standard Euclidean frameworks, such as $\mathbb{R}$ or $\mathbb{R%
}^{d}$; more recently general settings, such as spherical data or more
general manifolds have been considered. Here we focus on a second-generation
wavelet system on the sphere, the so-called needlets. Needlets were
introduced by Narcowich, Petrushev and Ward in \cite{npw1}, \cite{npw2};
their stochastic properties, when exploited on spherical random fields, were
studied in \cite{bkmpAoS}, \cite{bkmpBer}, \cite{ejslan} and \cite{spalan}.
This approach has been extended to more general manifolds by \cite{gm1}, 
\cite{gm2}, \cite{gm3}, while their generalization to spin fiber bundles on
the sphere were described in \cite{gelmar}, \cite{gelmar2010}. Most of these
researches can be motivated by applications to Cosmology and Astrophysics:
for instance, a huge amount of spherical data, concerning the Cosmic
Microwave Background radiation, are being provided by satellite missions
WMAP and Planck, see \cite{pbm06}, \cite{mpbb08}, \cite{pietrobon1}, \cite%
{fay08}, \cite{pietrobon2}, \cite{rudjord1}, \cite{dela08}, \cite{rudjord2}, 
\cite{dlm1} and \cite{dlm2} for more details. The applications mentioned
here, however, do not concern thresholding estimation, but rather they can
be related to the study of random fields on the sphere, such as angular
power spectrum estimation, higher-order spectra, testing for Gaussianity and
isotropy, and several others (see also \cite{cama}). Of more direct interest
here are experiments concerning incoming directions of Ultra High Energy
Cosmic Rays, such as the AUGER Observatory (http://www.auger.org).\
Ultra-High Energy Cosmic Rays are particles with energy above $10^{18}$ eV
reaching the Earth. Even if they were discovered almost a century ago, their
origin, their mechanisms of acceleration and propagation are still unknown.
As described in \cite{bkmpAoSb}, see also \cite{faytest}, an efficient
nonparametric estimation of the density function of these data would explain
the origin of the High Energy Cosmic Rays, i.e. if it is uniform, they are
generated by cosmological effects, such as the decay of the massive
particles generated during the Big Bang, or, on the other hand, if it is
highly non-uniform and, moreover, strongly correlated with the local
distribution of nearby Galaxies, it implies that the they are generated by
astrophysical phenomena, as for instance the acceleration into Active
Galactic Nuclei. Massive amount of data in this area are expected to be
available in the next few years. Also in view of this application, the
needlet approach was recently applied within the thresholding paradigm to
the estimation of the directional data: the seminal contribution in this
field is due to \cite{bkmpAoSb}, see also \cite{Kerkypicard}, \cite{knp},
while applications to astrophysical data is still under way, see for
instance \cite{fay08},\ \cite{faytest} and \cite{Iuppa} (the latter related
to Gamma Rays, another major field where these ideas have proved extremely
fruitful). Minimax estimators for spherical data, outside the needlets
approach, were also studied by Kim and coauthors (see \cite{kim}, \cite%
{kimkoo}, \cite{kookim}). Furthermore, adaptive nonparametric regression
estimators of spin-functions, based on spin pure and mixed needlets defined
in \cite{gelmar}, \cite{gelmar2010}, were investigated in \cite{dgm}. In
this case, the needlet nonparametric regression estimators were built on
spin fiber bundles on the sphere, i. e. the function to be estimated does
not take as its values scalars but algebraic curves living on the tangent
plane for each point of the sphere.

This work aims to extend the results established in \cite{bkmpAoSb} and \cite%
{dgm} towards the needlet block thresholding procedure following two main
directions. First of all, we will suggest a construction of blocks of
needlet coefficients, exploiting the Voronoi cells based on the geodesic
distance on the sphere. Then, we will define the needlet block thresholding
estimator, whose we will achieve a near optimal convergence rate. In view of
this purpose, we will use both the needlet properties established in \cite%
{npw1}, \cite{npw2} (see also \cite{marpecbook}) and a set of well
consolidated standard techniques, introduced by \cite{donoho1} (see also 
\cite{WASA}), remarking that this kind of approach has been also applied
within the needlet framework, just considering local thresholding, by \cite%
{bkmpAoSb} and \cite{dgm}. We also remark that we will describe the
nonparametric regression problem in terms of the so-called Gaussian white
noise model, able to give suitable approximation of discrete nonparametric
regression model, already commonly used in problems over $\mathbb{R}$ (see
for instance \cite{tsybakov} and Section \ref{sec:block}) and here used over
the $d$-dimensional sphere for the first time, at least at our knowledge.

Indeed, consider $f\in L^{p}\left( S^{d}\right) $, the needlet frame $%
\left\{ \psi _{jk}\right\} _{j,k}$, whose main properties will be described
in Section \ref{sec:need}, and the corresponding needlet coefficients $%
\left\{ \beta _{jk}\right\} _{j,k}$ given as 
\begin{equation*}
\beta _{jk}:=\int_{\mathbb{S}^{d}}f\left( x\right) \psi _{jk}\left( x\right)
dx\text{ .}
\end{equation*}%
As shown in Section \ref{sec:need}, from the reconstruction formula (\ref%
{reconst}), we can describe $f$ in terms of needlet decomposition as 
\begin{equation*}
f\left( x\right) =\sum_{j=0}^{+\infty }\sum_{k=1}^{N_{j}}\beta _{jk}\psi
_{jk}\left( x\right) \text{ ,}
\end{equation*}%
where the equality holds in the $L^{2}$-sense. Consider now $X_{n}$, a
sample path of an isonormal Gaussian process with common mean $f$ (see
Section \ref{sec:block}), equivalent to the available dataset, where the
random element $X_{n}\left( \psi _{jk}\right) $ can be described as 
\begin{equation*}
X_{n}\left( \psi _{jk}\right) =\beta _{jk}+\varepsilon _{jk;n}=:\widehat{%
\beta }_{jk}\text{ ,}
\end{equation*}%
so that $\varepsilon _{jk;n}$ is the noise with the properties described in
Section \ref{sec:block}. For any given resolution level $j$, we therefore
build $S_{j}$ blocks, labeled as $R_{j;s}$, $s=1,...,S_{j}$, each of them
containing $\ell _{j}$ cubature points. We define 
\begin{equation*}
\widehat{A}_{js;p}:=\frac{1}{\ell _{j}}\sum_{k\in R_{j;s}}\widehat{\beta }%
_{jk}^{p}\text{ ,}
\end{equation*}%
and the corresponding weight function%
\begin{equation*}
w_{js;p}:=I\left( \left\vert \widehat{A}_{js;p}\right\vert >\kappa
t_{n}^{p}\right) \,\ ,
\end{equation*}%
(more details on $\kappa $ and $t_{n}$ are in Section \ref{sec:block}).
Hence we build the needlet block thresholding estimator for $f$%
\begin{equation*}
\widehat{f}=\sum_{j=0}^{J_{n}}\sum_{s=0}^{S_{j}}w_{js;p}\left( \sum_{k\in
R_{j;s}}\widehat{\beta }_{jk}\psi _{jk}\right) \text{ .}
\end{equation*}%
We will show that, under some regularity conditions (cfr. Theorem \ref%
{maintheorem}, Section \ref{sec:minimax}), there exists $c_{p}>0$ so that 
\begin{equation*}
\sup_{f\in \mathcal{B}_{\pi q}^{r}\left( M\right) }\mathbb{E}\left\Vert 
\widehat{f}-f\right\Vert _{L^{p}\left( \mathbb{S}^{d}\right) }^{p}\leq
c_{p}n^{-\alpha \left( r,\pi ,p\right) }\text{ ,}
\end{equation*}%
where $\alpha \left( r,\pi ,p\right) $ corresponds to the optimal rate in
the regular zone (for definition, see Section \ref{sec:minimax}) and it
attains almost the optimal rate in the sparse zone (recall that in the soft
thresholding procedure the minimax rate is $\left( n/\log n\right) ^{-\alpha
\left( r,\pi ,p\right) })$. The improvement achieved by block thresholding
in the regular zone can be explained by the better trade-off between bias
and variance; the latter is due to the information in nearby coefficients.
Note that adaptivity is conditional upon a very careful choice of the block
sizes (see Section \ref{sec:block} and also \cite{caibloc2}, \cite{hkp} and 
\cite{hkp2}). For what concerns the sparse zone, the choice of the block
size itself will lead us to a not optimal result, as motivated in Sections %
\ref{sec:block} and \ref{sec:aux}.

The plan of the paper is as follows: Section \ref{sec:need} will recall some
preliminary notions, as needlets, their main properties and the Besov
spaces. Section \ref{sec:block} will describe the block thresholding
procedure we build for needlet regression estimation , while Section \ref%
{sec:minimax} will present the main minimax results. Section \ref{sec:aux}
will collect some auxiliary probabilistic results, while Section \ref%
{sec:proof} will exploit the proof of the main result of this work, named as
Theorem \ref{maintheorem}. Finally, Section \ref{sec:conclu} will compare
our results with the others in literature concerning needlet thresholding.

\section{Background results \label{sec:need}}

In this Section, we will review briefly a few of well-known features about
the Voronoi cells on the sphere, the spherical needlet construction and the
Besov spaces.

For what concerns the definition of Voronoi cells, we are following strictly 
\cite{bkmpBer}: further details can be found for instance in the textbook 
\cite{marpecbook}, see also \cite{bkmpAoS} and \cite{npw2}. From now on,
given two positive sequences $\left\{ a_{j}\right\} $ and $\left\{
b_{j}\right\} $, we write that $a_{j}\approx b_{j}$ if there exists a
constant $c>1$ so that $c^{-1}a_{j}\leq b_{j}\leq ca_{j}$ for all $j$. Let
us call $\mathbb{S}^{d}$ the unit sphere of $\mathbb{R}^{d+1}$. Furthermore, 
$B_{x_{0}}\left( \alpha \right) =\left\{ x\in \mathbb{S}^{d}:d\left(
x,x_{0}\right) <\alpha \right\} $, where $d\left( \cdot ,\cdot \right) $ is
the natural geodesic distance over the sphere, denotes the standard open
ball on $\mathbb{S}^{d}$ around $x_{0}\in \mathbb{S}^{d}$, while $\left\vert
A\right\vert $ is the surface measure of a general subset $A\subset \mathbb{S%
}^{d}$: let us recall that this is the unique positive measure invariant by
rotation, with total mass $\omega _{d}=\left( 2\pi \right) ^{\left(
d+1\right) /2}/\Gamma \left( \left( d+1\right) /2\right) $. Given $%
\varepsilon >0$, the set $\Xi _{\varepsilon }=\left\{
x_{1},...,x_{N}\right\} $ of points on $\mathbb{S}^{d}$, such that for $%
i\neq j$ we have $d\left( x_{i},x_{j}\right) >\epsilon $, is called a \emph{%
maximal }$\varepsilon $\emph{-net} if it satisfies $d\left( x,\Xi
_{\varepsilon }\right) <\varepsilon $ for $x\in \mathbb{S}^{d}$, $\cup
_{x_{i}\in \Xi _{\varepsilon }}B_{x_{i}}\left( \varepsilon \right) =$ $%
\mathbb{S}^{d}$ and $B_{x_{i}}\left( \varepsilon /2\right) \cap
B_{x_{j}}\left( \varepsilon /2\right) =\varnothing $, for $i\neq j$. For all 
$x_{i}\in \Xi _{\varepsilon }\,$, a family of Voronoi cells is defined as%
\begin{equation}
\mathcal{V}\left( x_{i}\right) =\left\{ x\in \mathbb{S}^{d}:\text{for }j\neq
i,\text{ }d\left( x,x_{i}\right) <d\left( x,x_{j}\right) \right\} \text{.}
\label{voronoi}
\end{equation}%
In \cite{bkmpBer} it is proved that:%
\begin{equation*}
B_{x_{i}}\left( \frac{\varepsilon }{2}\right) \subset \mathcal{V}\left(
x_{i}\right) \subset B_{x_{i}}\left( \varepsilon \right) \text{ .}
\end{equation*}

Now, we resume the construction of the scalar needlet framework, suggesting
for a more detailed discussion \cite{npw1}, \cite{npw2}, see also \cite%
{bkmpAoSb} and \cite{marpecbook}. A needlet system describes a
well-localized tight frame on the sphere: it is a well-known fact (cfr. \cite%
{npw1}) that any function belonging to $L^{2}\left( \mathbb{S}^{d}\right) $
can be represented as a linear combination of the components of that frame,
preserving some of the most relevant properties of needlets. Indeed, let us
recall that the space $L^{2}\left( \mathbb{S}^{d}\right) $ of
square-integrable functions on the sphere can be decomposed as the direct
sum of the spaces $H_{l}$ of harmonic polynomials of degree $l$, spanned by
spherical harmonics of degree $l$, whose definition and properties can be
found in \cite{vmk} and \cite{bkmpAoSb}; here we just recall that its
dimension corresponds to $g_{l,d}=\binom{l+d}{d}-\binom{l+d-2}{d}$. For
every $f\in L^{2}\left( \mathbb{S}^{d}\right) $, the following kernel
operator describes the orthogonal projector onto $H_{l}$:%
\begin{equation*}
P_{H_{l}}f\left( x\right) =\int_{\mathbb{S}^{d}}L_{l}\left( \left\langle
x,y\right\rangle \right) f\left( y\right) dy\text{ ,}
\end{equation*}%
where $L_{l}$ is the Gegenbauer polyomial with parameter $\left( d-1\right)
/2$ and degree $l$, normalized so that 
\begin{equation*}
\int_{\mathbb{S}^{d}}L_{l}\left( x\right) L_{m}\left( x\right) \left(
1-x^{2}\right) ^{\frac{d}{2}-1}dx=\frac{g_{l,d}\Gamma \left( \frac{d}{2}%
\right) ^{2}}{\Gamma \left( d\right) \omega _{d}^{2}}\delta _{l,k}\text{ .}
\end{equation*}%
Following \cite{npw1}, \cite{npw2} (see also \cite{bkmpAoSb}), if we
consider 
\begin{equation*}
\Pi _{l}=\underset{l^{\prime }=0}{\overset{l}{\bigoplus }}H_{l^{^{\prime }}}%
\text{,}
\end{equation*}%
the space of the restrictions to $\mathbb{S}^{d}$ of the polynomials of
degree less (and equal) to $l$, the following quadrature formula holds (see
for instance \cite{bkmpAoSb}): given $l\in 
\mathbb{N}
$, there exists a finite subset $\chi _{l}$ such that a positive real number 
$\lambda _{\xi }$ (the cubature weight) corresponds to each $\xi \in \chi
_{l}$ (the cubature point) and for all $f\in $ $\Pi _{l}$, 
\begin{equation*}
\int_{\mathbb{S}^{d}}f\left( x\right) dx=\underset{\xi \in \chi _{l}}{\sum }%
\lambda _{\xi }f\left( \xi \right) \text{.}
\end{equation*}

Given $B>1\,$\ and a resolution level $j$, we call $\chi _{\left[ B^{2\left(
j+1\right) }\right] }=\mathcal{Z}_{j}$, $card\left( \mathcal{Z}_{j}\right)
=N_{j}$; since now any element of the set of cubature points and weights, $%
\left\{ \xi _{jk},\lambda _{jk}\right\} $, will be indexed by $j$, the
resolution level, and $k$, the cardinality over $j$, belonging to $\mathcal{Z%
}_{j}$. Furthermore, we choose $\left\{ \mathcal{Z}_{j}\right\} _{j\geq 1}$%
to be nested so that$\ $%
\begin{equation}
N_{j}\approx B^{dj},\lambda _{jk}\approx B^{-dj}\text{ .}  \label{lambdaeN}
\end{equation}%
We consider a symmetric, real-valued, nonnegative function $b\left( \cdot
\right) $ (see again \cite{bkmpAoSb}) such that

\begin{enumerate}
\item it has compact support on $\left[ B^{-1},B\right] $;

\item $b\in C^{\infty }\left( 
\mathbb{R}
\right) $;

\item the following \textit{unitary property }holds for \textit{\ }$%
\left\vert \xi \right\vert \geq 1$:
\end{enumerate}

\begin{equation*}
\underset{j\geq 0}{\sum }b^{2}\left( \frac{\xi }{B^{j}}\right) =1\text{ .}
\end{equation*}%
For each $\xi _{jk}\in \mathcal{Z}_{j}$, given $b\left( \cdot \right) $ and $%
B$, the scalar needlets are defined as:%
\begin{equation*}
\psi _{jk}\left( x\right) =\sqrt{\lambda _{jk}}\underset{B^{j-1}<l<B^{j+1}}{%
\sum }b\left( \frac{l}{B^{j}}\right) L_{l}\left( \left\langle x,\xi
_{jk}\right\rangle \right) \text{ .}
\end{equation*}%
The properties of the function $b\left( \cdot \right) $ yield to three basic
properties of the needlets. Indeed, from the infinite differentiability of $%
b\left( \cdot \right) $, we obtain a quasi-exponential localization
property\ (see for instance \cite{npw2}), which states that for $k\in 
\mathbb{N}$, there exists $c_{k}$ such that for $x\in \mathbb{S}^{d}$%
\begin{equation}
\left\vert \psi _{jk}\left( x\right) \right\vert \leq \frac{c_{k}B^{\frac{d}{%
2}j}}{\left( 1+B^{\frac{d}{2}j}d\left( \xi _{jk},x\right) \right) ^{k}}\text{%
,}  \label{localization}
\end{equation}%
where $d\left( \xi _{jk},x\right) $ is the geodesic distance on the sphere.
In view of this property, it is possible to fix a bound (upper and lower),
for the norms of needlets on $L^{p}\left( \mathbb{S}^{d}\right) $, for $%
1\leq p\leq +\infty $. Given $p$, there exist two positive constants $c_{p}$
and $C_{p}$ such that 
\begin{equation}
c_{p}B^{jd\left( \frac{1}{2}-\frac{1}{p}\right) }\leq \left\Vert \psi
_{jk}\right\Vert _{L^{p}\left( \mathbb{S}^{d}\right) }\leq C_{p}B^{jd\left( 
\frac{1}{2}-\frac{1}{p}\right) }\text{ .}  \label{boundnorm}
\end{equation}%
Because the function $b\left( \cdot \right) $ has compact support in $\left[
B^{-1},B\right] $, it follows that $b\left( \frac{l}{B^{j}}\right) $ has
compact support in $\left[ B^{j-1},B^{j+1}\right] $, hence needlets have
compact support in the harmonic domain. Finally, the unitary property leads
to the following reconstruction formula (see again \cite{npw1}): for $f\in
L^{2}\left( \mathbb{S}^{d}\right) $, in the $L^{2}$ sense, 
\begin{equation}
f(x)=\sum_{j,k}\beta _{jk}\psi _{jk}(x)\text{ ,}  \label{reconst}
\end{equation}%
\begin{equation}
\beta _{jk}:=\left\langle f,\psi _{jk}\right\rangle _{L^{2}\left( \mathbb{S}%
^{d}\right) }=\int_{\mathbb{S}^{d}}\overline{\psi }_{jk}\left( x\right)
f\left( x\right) dx\text{ ,}  \label{needcoeffic}
\end{equation}%
where $\beta _{jk}$ are the so-called needlet coefficients.

Before concluding this Section, we recall the definition and some main
properties of the Besov spaces, referring again to \cite{bkmpAoSb}, \cite%
{dgm} and \cite{WASA} for further theoretical details and discussions. Let $%
f $ $\in L^{\pi }\left( \mathbb{S}^{d}\right) $; we define 
\begin{equation*}
G_{k}\left( f,\pi \right) =\inf_{H\in \mathcal{H}_{k}}\left\Vert
f-H\right\Vert _{L^{\pi }\left( \mathbb{S}^{d}\right) }\text{ ,}
\end{equation*}%
which is the approximation error when replacing $f$ \ by an element in $%
\mathcal{H}_{k}$ . The Besov space $\mathcal{B}_{\pi q}^{r}$ is therefore
defined as the space of functions such that $f\in L^{\pi }\left( \mathbb{S}%
^{d}\right) $ and%
\begin{equation*}
\left( \sum_{k=0}^{\infty }\frac{1}{k}\left( k^{r}G_{k}\left( f,\pi \right)
\right) ^{q}\right) <\infty \text{ .}
\end{equation*}%
The last condition is equivalent to%
\begin{equation*}
\left( \sum_{j=0}^{\infty }\left( B^{jr}G_{B^{j}}\left( f,\pi \right)
\right) ^{q}\right) <\infty \text{ .}
\end{equation*}%
Moreover, $F\in \mathcal{B}_{\pi q}^{r}$ if and only if, for every $%
j=1,2,\ldots $%
\begin{equation*}
\left( \sum_{k}\left( \left\vert \beta _{jk}\right\vert \left\Vert \psi
_{jk}\right\Vert _{L^{\pi }\left( \mathbb{S}^{d}\right) }\right) ^{\pi
}\right) ^{\frac{1}{\pi }}=\varepsilon _{j}B^{-jr}
\end{equation*}%
where $\varepsilon _{j}\in \ell _{q}$ and $B>1$. The Besov norm is defined
as follows:%
\begin{equation*}
\left\Vert f\right\Vert _{\mathcal{B}_{\pi q}^{r}}=\left\{ 
\begin{matrix}
\left\Vert f\right\Vert _{L^{\pi }\left( \mathbb{S}^{d}\right) }+\left[
\sum_{j}B^{jq\left( r+d\left( \frac{1}{2}-\frac{1}{\pi }\right) \right)
}\left\{ \sum_{k}\left\vert \beta _{jk}\right\vert ^{\pi }\right\} ^{\frac{q%
}{\pi }}\right] ^{\frac{1}{q}} & \text{ \ }q<\infty \\ 
\left\Vert f\right\Vert _{L^{\pi }\left( \mathbb{S}^{d}\right) }+\underset{j}%
{\sup }B^{j\left( r+d\left( \frac{1}{2}-\frac{1}{\pi }\right) \right)
}\left\Vert \left( \beta _{jk}\right) _{k}\right\Vert {_{\ell _{\pi }}} & 
\text{ \ }q=\infty%
\end{matrix}%
\right. \text{.}
\end{equation*}%
As shown for instance in \cite{bkmpAoSb}, if $\max \left( 0,1/\pi
-1/q\right) <r$ and $\pi ,q>1$, then we have%
\begin{equation*}
f\in \mathcal{B}_{\pi q}^{r}\Leftrightarrow \left\Vert f\right\Vert _{%
\mathcal{B}_{\pi q}^{r}}<\infty \text{ .}
\end{equation*}%
The Besov spaces present, among their properties, some embeddings which will
be pivotal in our proofs below. As proven in \cite{bkmpAoSb} and \cite{dgm},
we have that, for $\pi _{1}\leq \pi _{2},$ $q_{1}\leq q_{2}$%
\begin{equation}
\mathcal{B}_{\pi q_{1}}^{r}\subset \mathcal{B}_{\pi q_{2}}^{r}\text{ },\text{
}\mathcal{B}_{\pi _{2}q}^{r}\subset \mathcal{B}_{\pi _{1}q}^{r}\text{ , }%
\mathcal{B}_{\pi _{1}q}^{r}\subset \mathcal{B}_{\pi _{2}q}^{r-d\left( \frac{1%
}{\pi _{1}}-\frac{1}{\pi _{2}}\right) }.  \label{embeddings}
\end{equation}

\section{Needlet Block Thresholding on the Sphere\label{sec:block}}

In this Section we will discuss the needlet estimators for nonparametric
regression problems and, then, we will suggest a procedure to fix blocks for
any given resolution level $j$ and, consequently, we will define the
so-called needlet block threshold estimator. The construction of the needlet
estimators is close to the one described in \cite{bkmpAoSb}, \cite{dgm} for
local thresholding, in turn an adaptation to the sphere of the procedure
developed on $\mathbb{R}$ in \cite{hkp}, \cite{hkp2}, see also \cite{WASA}.

We start by introducing the Gaussian white noise model over the line
segment, then we will extend it to the $d$-dimensional sphere using the
so-called uncentered isonormal Gaussian processes.

Usually, in the mathematical statistics literature, a nonparametric
regression problem over the line segment $\left[ 0,1\right] $ is defined by
the following Gaussian white noise model, e.g., the stochastic differential
equation (see for instance \cite{tsybakov})%
\begin{equation}
dY_{t}=f\left( t\right) dt+\varepsilon dW\left( t\right) \text{ , }t\in %
\left[ 0,1\right] \text{ ,}  \label{whitenoise}
\end{equation}%
where $W$ \ is a standard Wiener process on $\left[ 0,1\right] $, $f$ is an
unknown function over $\left[ 0,1\right] $ and $\varepsilon =n^{-1/2}$, for $%
n$ a growing sequence of integers. It is assumed that a sample path $%
X=\left\{ Y\left( t\right) ,0\leq t\leq 1\right\} $ is observed; the
statistical problem regards the estimation of the unknown function $f\in 
\mathcal{F}$, where $\mathcal{F}$ is a given nonparametric class of
functions. For $x\in \left[ 0,1\right] $, the function $x\mapsto f_{n}\left(
x,X\right) $, defined on $\left[ 0,1\right] $ and measurable with respect $X$%
, is the estimator of $f$.

\begin{remark}
\label{whitecorresp}As proven in \cite{brownlow}, see also for instance \cite%
{tsybakov} Section 1.10 and the references therein, the nonparametric linear
regression model and linear regression in terms of the Gaussian white noise
model are asymptotically equivalent. Indeed, consider the process $Y$ in (%
\ref{whitenoise}). If $\Delta >0$, we have%
\begin{equation*}
\frac{Y\left( t+\Delta \right) -Y\left( t\right) }{\Delta }=\frac{1}{\Delta }%
\int_{t}^{t+\Delta }f\left( s\right) ds+\frac{\varepsilon }{\Delta }\left(
W\left( t+\Delta \right) -W\left( t\right) \right) \text{.}
\end{equation*}%
Now, if 
\begin{equation*}
y\left( t\right) =\frac{Y\left( t+\Delta \right) -Y\left( t\right) }{\Delta }%
\text{, }z\left( t\right) =\frac{\varepsilon }{\Delta }\left( W\left(
t+\Delta \right) -W\left( t\right) \right) \text{ , }
\end{equation*}%
for any $t\in \left[ 0,1\right] $, $z\left( t\right) $ is a centered
Gaussian with variance $\varepsilon ^{2}/\Delta $. Taking $\varepsilon =1/%
\sqrt{n}$ and $\Delta =1/n$, $z\left( t\right) \sim \mathcal{N}\left(
0,1\right) $. Up to deterministic residuals, for sufficient small $\Delta $
and sufficient smooth $f$, 
\begin{equation*}
\frac{1}{\Delta }\int_{t}^{t+\Delta }f\left( s\right) ds-f\left( t\right)
\rightarrow 0\text{ ,}
\end{equation*}%
hence 
\begin{equation*}
y\left( t\right) =f\left( t\right) +z\left( t\right) \text{ .}
\end{equation*}%
For $i=1,...,n$, we take $X_{i}=i/n$, $Y_{i}=Y\left( X_{i}\right) ,$ $%
z_{i}=z\left( X_{i}\right) $, so that 
\begin{equation*}
Y_{i}=f\left( X_{i}\right) +z_{i}\text{ ,}
\end{equation*}%
which corresponds to the nonparametric regression model with regular design
and i.i.d. errors $z_{i}$ distributed as $\mathcal{N}\left( 0,1\right) $.
Further details can be found in \cite{tsybakov}.
\end{remark}

If we consider the $d$-dimensional sphere, we can describe the same problem
in terms of the so-called uncentered isonormal Gaussian processes with mean $%
f$. Following \cite{nourdinpeccati}, an isonormal Gaussian process over $%
\mathfrak{H}$ is defined as $X=\left\{ X\left( h\right) :h\in \mathfrak{H}%
\right\} $, where $\mathfrak{H}$ is a real separable Hilbert space, with
inner product $\left\langle \cdot ,\cdot \right\rangle $. Hence, we assume
that $X$ describes a family of (uncentered) Gaussian variables, defined on
some probability space $\left( \Omega ,\mathfrak{F,}P\right) $ such that for
all $h_{1},h_{2}\in \mathfrak{H}$, $\left\{ X(h_{1}),X(h_{2})\right\} $ are
jointly Gaussian with mean 
\begin{equation*}
\mathbb{E}X(h)=\left\langle h,f\right\rangle =\int_{\mathbb{S}^{d}}f(x)h(x)dx
\end{equation*}%
and covariance%
\begin{equation*}
\mathbb{E}\left( X(h_{1})-\mathbb{E}X(h_{1})\right) \left( X(h_{2})-\mathbb{E%
}X(h_{2})\right) =\left\langle h_{1},h_{2}\right\rangle \text{ .}
\end{equation*}%
In our case, $\Omega :=\mathbb{S}^{d}$ and $\mathfrak{F}$ is the $\sigma $%
-algebra generated by $X$. We will use $L^{2}\left( \mathbb{S}^{d}\right) $
instead of $L^{2}\left( \mathbb{S}^{d},\mathfrak{F},P\right) $ to simplify
the notation. We shall in fact be concerned with $X_{n}=\left\{ Y\left(
x\right) ,x\in \mathbb{S}^{d}\right\} $, the observed sample path associated
to the process, where we assume that 
\begin{equation*}
\mathbb{E}X_{n}(h)=\left\langle h,f\right\rangle =\int_{\mathbb{S}%
^{d}}f(x)h(x)dx
\end{equation*}%
and covariance%
\begin{equation*}
\mathbb{E}\left( X_{n}(h_{1})-\mathbb{E}X_{n}(h_{1})\right) \left(
X_{n}(h_{2})-\mathbb{E}X_{n}(h_{2})\right) =\frac{1}{n}\left\langle
h_{1},h_{2}\right\rangle \text{ .}
\end{equation*}%
In other words, in order to estimate the unknown function $f$ , on a proper
class of function (in our case, the Besov ball), we will study the estimator
of $f$ which is a function $x\longmapsto \widehat{f}\left( x\right) =%
\widehat{f}\left( x,X_{n}\right) $ defined on the $d$-dimensional sphere and
measurable with respect to the observation $X_{n}$, see again \cite{tsybakov}
and cfr. Remark \ref{whitecorresp}.  

Consider now the usual needlet system $\left\{ \psi _{jk}\right\} _{j,k}$
and let $f\in L^{p}(\mathbb{S}^{d})$; we have the following: 
\begin{eqnarray}
\beta _{jk} &=&\mathbb{E}X_{n}(\psi _{jk})=\left\langle \psi
_{jk},f\right\rangle =\int_{\mathbb{S}^{d}}f(x)\psi _{jk}(x)dx\text{ ,} 
\notag \\
\widehat{\beta }_{jk} &=&X_{n}(\psi _{jk})=\beta _{jk}+\varepsilon _{jk;n}%
\text{ , }  \label{needcoeff2}
\end{eqnarray}%
where%
\begin{eqnarray}
\mathbb{E}\varepsilon _{jk;n} &=&\mathbb{E}\left( X_{n}(\psi _{jk})-\mathbb{E%
}X_{n}(\psi _{jk})\right) =0\text{ , }  \notag \\
E\varepsilon _{jk;n}^{2} &=&\frac{1}{n}\left\langle \psi _{jk},\psi
_{jk}\right\rangle _{L^{2}(\mathbb{S}^{d})}=\frac{1}{n}\left\Vert \psi
_{jk}\right\Vert _{L^{2}(\mathbb{S}^{d})}^{2}\text{ , }  \label{varnoise} \\
E\varepsilon _{jk_{1};n}\varepsilon _{jk_{2};n} &=&\frac{1}{n}\left\langle
\psi _{jk_{1}},\psi _{jk_{2}}\right\rangle _{L^{2}(\mathbb{S}^{d})}  \notag
\\
&=&\frac{1}{n}\frac{\sum_{l}b^{2}(\frac{l}{2^{j}})L_{l}(\left\langle \xi
_{jk_{1}},\xi _{jk_{2}}\right\rangle )}{\sum_{l}b^{2}(\frac{l}{2^{j}}%
)L_{l}(1)}  \notag \\
&=&\frac{1}{n}\frac{\sum_{l}b^{2}(\frac{l}{2^{j}})L_{l}(\left\langle \xi
_{jk_{1}},\xi _{jk_{2}}\right\rangle )}{\sum_{l}b^{2}(\frac{l}{2^{j}})\frac{%
g_{l,d}}{\omega _{d}}}\text{ .}  \notag
\end{eqnarray}

In a formal sense, one could consider the Gaussian white noise measure on
the sphere such that for all $A,B\subset \mathbb{S}^{d},$ we have%
\begin{equation*}
\mathbb{E}W(A)W(B)=\int_{A\cap B}dx\text{ ,}
\end{equation*}%
so that 
\begin{equation*}
\varepsilon _{jk;n}=\frac{1}{n}\int_{\mathbb{S}^{d}}\psi _{jk}(x)W(dx)\text{
,}
\end{equation*}%
as in the Gaussian white noise model on $\left[ 0,1\right] $, described by 
\cite{tsybakov}. 

\begin{remark}
\label{spherecorrespond}Following the same reasoning illustrated in Remark %
\ref{whitecorresp}, asymptotic equivalence holds between our Gaussian white
noise model over $\mathbb{S}^{d}$ and the discrete nonparametric regression
model%
\begin{equation*}
Y_{i}=f\left( X_{i}\right) +\varepsilon _{i}\text{ , }i=1...n\text{ ,}
\end{equation*}%
where in this case $\left\{ X_{i}\right\} _{i=1}^{n}$ are uniform random
locations over $\mathbb{S}^{d}$ and $\left\{ Y_{i}\right\} _{i=1}^{n}$ are
the corresponding observations. By the practical point of view, considering
the Remarks \ref{whitecorresp} and \ref{spherecorrespond}, (cfr. also \cite%
{dgm}), given the dataset $\left\{ X_{i}\right\} _{i=1}^{n}$, the needlets
estimator defined in (\ref{needcoeff2}) corresponds to%
\begin{equation*}
\widehat{\beta }_{jk}:=\frac{1}{n}\sum_{i=1}^{n}Y_{i}\psi _{jk}\left(
X_{i}\right) \text{ }=\frac{1}{n}\sum_{i=1}^{n}\left[ \psi _{jk}\left(
X_{i}\right) f\left( X_{i}\right) +\psi _{jk}\left( X_{i}\right) \varepsilon
_{i}\right] \text{ .}
\end{equation*}%
Furthermore, the unbiasedness is easily verified : 
\begin{eqnarray}
E\left( \widehat{\beta }_{jk}\right)  &=&\frac{1}{n}\sum_{i=1}^{n}E\left[
\psi _{jk}\left( X_{i}\right) f\left( X_{i}\right) +\psi _{jk}\left(
X_{i}\right) \varepsilon _{i}\right]   \notag \\
&=&\int_{\mathbb{S}^{d}}\psi _{jk}\left( x\right) f\left( x\right) dx=\beta
_{jk}\text{ .}  \label{meanbeta}
\end{eqnarray}
\end{remark}

As described above (see also \cite{bkmpAoSb}, \cite{dgm}), $f$ can be
described in terms of needlet coefficients, up to a constant, as 
\begin{equation*}
f=\sum_{j\geq 0}\sum_{k=1}^{N_{j}}\beta _{jk}\psi _{jk}\text{ .}
\end{equation*}

Let us now define the threshold blocks: as anticipated in the Introduction,
differently from \cite{hkp}, the structure itself of the needlet framework
suggests a quite intuitive way to be followed. Let us fix $j>0$: recall that
for each resolution level $j$, we have $N_{j}\approx B^{dj}$ cubature
points. Given the size of the blocks, i.e. the number of cubature points
belonging to each of them - let us say $\ell _{j}$ - we will build using (%
\ref{voronoi}) a set of Voronoi cells, containing $\ell _{j}$ cubature
points. For each cell, we choose a cubature point $\xi _{js}$ to index it:
we define $S_{j}\left( \ell _{j}\right) $ as the number of Voronoi cells
obtained to split cubature points into groups of cardinality $\ell _{j}$.
Let us define the set%
\begin{equation}
R_{j;s}=\left\{ k:\xi _{jk}\in \mathcal{V}\left( \xi _{js}\right) \right\} ,%
\text{ }s=1,...,S_{j}.  \label{blokke}
\end{equation}%
From (\ref{voronoi}), it is immediate to see that each cubature point $\xi
_{jk}$ belongs to a unique Voronoi cell. Obviously, $S_{j}\cdot \ell
_{j}=N_{j}$ .

Let us call, for any integer $p\geq 1$,%
\begin{equation*}
A_{js;p}:=\frac{1}{\ell _{j}}\sum_{k\in R_{j;s}}\beta _{jk}^{p}\text{ ,}
\end{equation*}%
hence we can define the corresponding estimator 
\begin{equation*}
\widehat{A}_{js;p}=\frac{1}{\ell _{j}}\sum_{k\in R_{j;s}}\widehat{\beta }%
_{jk}^{p}\text{ ,}
\end{equation*}%
similar to the ones suggested in \cite{hkp}, Remark 4.7.

We build the following weight function as follows%
\begin{equation*}
w_{js;p}=I\left( \left\vert \widehat{A}_{js;p}\right\vert >\kappa
t_{n}^{p}\right) \text{ ; }
\end{equation*}%
we can define the function estimator as: 
\begin{equation}
\widehat{f}=\sum_{j=0}^{J_{n}}\sum_{s=1}^{S_{j}}\left( \sum_{k\in R_{j;s}}%
\widehat{\beta }_{jk}\psi _{jk}\right) w_{js;p}\text{ ,}  \label{densityest}
\end{equation}%
where:

\begin{itemize}
\item $J_{n}$ is the highest resolution level considered, taken such that 
\begin{subequations}
\begin{equation}
B^{J_{n}}=n^{\frac{1}{d}}\text{ ,}  \label{bbound}
\end{equation}%
consistent with the existent literature (see for instance \cite{WASA}, \cite%
{bkmpAoSb})

\item $\kappa $ is the threshold constant (for more discussions see for
instance \cite{bkmpAoSb}, \cite{dgm}, and \cite{WASA}). As suggested in \cite%
{bkmpAoSb}, $\kappa $ has to be proportional to $M$, the bound of $%
\left\Vert f\right\Vert _{\infty }$, multiplied by a constant $\kappa _{0}$
that can be made explicit with an iterative procedure to count the blocks
not annihilated by the threshold;

\item the scaling factor $t_{n}$, depends on the size of the sample. We will
fix 
\end{subequations}
\begin{equation*}
t_{n}=n^{-\frac{1}{2}}\text{.}
\end{equation*}%
This choice is motivated by two main facts. On one hand, it allows $\widehat{%
f}$ to attain the optimal rate of convergence in the regular zone (cfr.
Theorem \ref{maintheorem} and Remark \ref{remark}). On the other hand, this
choice is consistent with the literature related to thresholding procedures
in needlet frameworks, see \cite{bkmpAoSb} and \cite{dgm}). 

\item The block size will be chosen so that%
\begin{equation*}
\ell _{j}=\left[ N_{j}\right] ^{\eta }\text{ ,}
\end{equation*}%
where $\left[ \cdot \right] $ denotes the integer part and $0<\eta <\frac{1}{%
2}$. The size of the block has to be chosen also considering, on one hand,
the value of the threshold (see above the point related to the choice of $%
\kappa $) and, on the other hand, the number of cubature points at a fixed
resolution level $j$. More details will be given in the Section \ref%
{sec:conclu}.
\end{itemize}

By the practical point of view, given the size of the sample $n$ and the
scale parameter $B$, $J_{n}$ and $t_{n}$ are easily computed. Therefore, the
experimenter should test, for different sizes of the blocks, chosen taking
on account the whole number of cubature points, and for different values of $%
\kappa _{0}$ the number of blocks not annihilated by the procedure. 

\section{Minimax risk rates of convergence\label{sec:minimax}}

This Section aims to describe the performance of the procedure in terms of
the optimality of its convergence rates with respect to general $L^{p}\left( 
\mathbb{S}^{d}\right) $-loss functions: this result is established in the
Theorem \ref{maintheorem}. First of all, we recall the definition of optimal
rate of convergence from \cite{WASA}. We say that an estimator $\widehat{f}$
attains the optimal rate of convergence $R_{n}\left( V,p\right) $ on the
class $V$ for the $L_{p}$-risk if 
\begin{equation*}
\sup_{f\in V}\mathbb{E}\left\Vert \widehat{f}-f\right\Vert
_{L^{p}}^{p}\simeq R_{n}\left( V,p\right) \text{ .}
\end{equation*}%
In our case $V$ corresponds to the Besov ball $\mathcal{B}_{\pi q}^{r}\left(
M\right) $ and $R_{n}\left( V,p\right) $ will assume the form of $n^{-\alpha
\left( r,\pi ,p\right) }$.

\begin{theorem}
\label{maintheorem}Let $f\in \mathcal{B}_{\pi q}^{r}\left( M\right) $, the
Besov ball so that $\left\Vert f\right\Vert _{\mathcal{B}_{\pi q}^{r}\left(
M\right) }\leq M<+\infty $, $r-\frac{d}{\pi }>0$. Consider $\widehat{f}$ as
defined by (\ref{densityest}). For $p\in \mathbb{N}$, there exists a
constant $c_{p}=c_{p}\left( p,r,q,M,B\right) $ such that 
\begin{equation*}
\sup_{f\in \mathcal{B}_{\pi q}^{r}\left( M\right) }\mathbb{E}\left\Vert 
\widehat{f}-f\right\Vert _{L^{p}\left( \mathbb{S}^{d}\right) }^{p}\leq
c_{p}n^{-\alpha \left( r,\pi ,p\right) }\text{ ,}
\end{equation*}%
where 
\begin{equation*}
\alpha \left( r,\pi ,p\right) =\left\{ 
\begin{array}{c}
\frac{rp}{2r+d}\text{ \ \ \ \ \ \ \ \ \ \ \ \ for }\pi \geq \frac{dp}{2r+d}
\\ 
\frac{p\left( r-d\left( \frac{1}{\pi }-\frac{1}{p}\right) \right) }{2\left(
r-d\left( \frac{1}{\pi }-\frac{1}{2}\right) \right) }-\delta \text{ for }\pi
<\frac{dp}{2r+d}%
\end{array}%
\right. \text{ ,}
\end{equation*}%
where $\delta =\delta \left( \eta ,d,\pi ,p,r\right) =\frac{\eta d\left( 1-%
\frac{\pi }{p}\right) }{2\left( r-d\left( \frac{1}{\pi }-\frac{1}{2}\right)
\right) }$.

If $p=+\infty $, there exists a constant $c_{\infty }=c_{\infty }\left(
r,q,M,B\right) $%
\begin{equation*}
\sup_{f\in \mathcal{B}_{\pi q}^{r}\left( M\right) }\mathbb{E}\left\Vert 
\widehat{f}-f\right\Vert _{\infty }\leq c_{\infty }n^{-\alpha \left( r,\pi
,p\right) }\text{ ,}
\end{equation*}%
where 
\begin{equation*}
\alpha \left( r,\pi ,p\right) =\frac{\left( r-\frac{d}{\pi }\right) }{%
2\left( r-d\left( \frac{1}{\pi }-\frac{1}{2}\right) \right) }\text{ .}
\end{equation*}
\end{theorem}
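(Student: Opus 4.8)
The plan is to follow the classical bias--variance decomposition of Donoho--Johnstone--Kerkyacharian--Picard, adapted to the block structure and to the needlet frame. First I would write $f^{\ast}-f$ as the sum of a high-frequency truncation term $\sum_{j>J_{n}}\sum_{k}\beta_{jk}\psi_{jk}$ and the estimation error on the levels $j\leq J_{n}$. The truncation term is pure bias: since $r-2/\pi>0$, the embeddings (\ref{embeddings}) place $f$ in an $L^{p}$-Besov space of positive smoothness, so the Besov characterisation of the coefficients forces the $L^{p}$ norm of the tail to decay geometrically in $J_{n}$; as $B^{J_{n}}=n^{1/2}$ this contributes a term negligible against the target rate, and the same argument with the embedding into $C(\mathbb{S}^{2})$ handles the $p=\infty$ tail.

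For the estimation error I would split each block contribution $\widehat{B}_{js}w_{js;p}-B_{js}$, with $B_{js}=\sum_{k\in R_{j;s}}\beta_{jk}\psi_{jk}$, across the four standard events that compare the empirical block statistic $\widehat{A}_{js;p}$ and its population version $A_{js;p}$ to the threshold $\kappa t_{n}^{p}$: correct keep ($\widehat{A}$ and $A$ both large), false keep ($\widehat{A}$ large, $A$ small), false discard ($\widehat{A}$ small, $A$ large), and correct discard (both small). On the correct-keep event the block error is the pure-noise fluctuation $\sum_{k\in R_{j;s}}\varepsilon_{jk;n}\psi_{jk}$, whose $L^{p}$ moment I would bound through Gaussian moment estimates and the noise variance $E\varepsilon_{jk;n}^{2}=n^{-1}\|\psi_{jk}\|_{2}^{2}$ from (\ref{varnoise}). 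On the correct-discard event the error is the pure bias $B_{js}$ with $A_{js;p}$ small, controlled directly by the Besov constraint. The two wrong-decision events are rare and are dealt with through large-deviation probabilities for $\widehat{A}_{js;p}-A_{js;p}$, the surviving magnitudes being absorbed into the dominant terms.

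The bridge from these block-level estimates to the $L^{p}(\mathbb{S}^{2})$ norm is the single-scale needlet norm equivalence $\|\sum_{k}c_{k}\psi_{jk}\|_{L^{p}}\approx B^{j(1-2/p)}(\sum_{k}|c_{k}|^{p})^{1/p}$, which follows from the localization (\ref{localization}) and the norm bounds (\ref{boundnorm}) and which I would import from the auxiliary results of Section \ref{sec:aux}. Combining it with the Besov coefficient bound $\sum_{k}|\beta_{jk}|^{\pi}\lesssim M^{\pi}B^{-j\pi(r+1/2-1/\pi)}\varepsilon_{j}^{\pi}$ (with $\varepsilon_{j}\in\ell_{q}$), I would sum the variance contribution over the low resolution levels and the bias contribution over the high ones, and then optimise over the crossover level $j_{1}$. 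Balancing the two geometric series produces the exponent $\alpha(r,\pi,p)$; the dichotomy between $\pi\geq 2p/(2(r+2))$ and $\pi<2p/(2(r+2))$ is precisely the dense/sparse alternative, in which either the variance term or the bias term dominates, yielding the two branches of the rate (and, for $p=\infty$, the stated exponent after the usual sharpening of the sup-norm control).

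The step I expect to be the main obstacle is the large-deviation control of the block statistic on the false-keep and false-discard events. Since $\widehat{A}_{js;p}=\ell_{j}^{-1}\sum_{k\in R_{j;s}}(\beta_{jk}+\varepsilon_{jk;n})^{p}$ is a degree-$p$ polynomial in the Gaussian noise, and the noise coefficients $\varepsilon_{jk;n}$ within a Voronoi block are correlated, with covariance dictated by the needlet kernel (\ref{varnoise}), showing that a block is misclassified only with small probability requires concentration estimates for these correlated polynomial forms. Here the Voronoi construction (\ref{voronoi}) and the quasi-exponential localization (\ref{localization}), which forces the within-block correlations to decay, are exactly what make the averaging over $\ell_{j}\approx N_{j}^{\eta}$ points effective; this is the technical heart of the argument and is carried by the auxiliary estimates of Section \ref{sec:aux}.
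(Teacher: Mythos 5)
Your proposal takes essentially the same route as the paper's own proof: the same bias/estimation decomposition at the cutoff $J_{n}$, the same four-way split (the paper's $Aa$, $Au$, $Ua$, $Uu$ terms) comparing $\widehat{A}_{js}$ and $A_{js}$ against the threshold, the same treatment of the dense and sparse zones via Besov embeddings and a crossover resolution level, and the same reliance on a concentration estimate for the correlated within-block Gaussian noise (the paper's Lemma \ref{epsilonlemma}, proved by a Hermite-expansion argument exploiting needlet localization), which you correctly single out as the technical heart. There is nothing substantively different to report.
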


Let us recall that in literature (cfr. for instance \cite{WASA}) the case $%
\pi \geq dp/\left( 2r+d\right) $ is named \emph{regular case}, the other
being referred as \emph{sparse case}.

\begin{remark}
\label{remark}Our results achieve the minimax rates provided in \cite%
{bkmpAoSb} and \cite{dgm}, see also \cite{WASA} just in the regular zone,
while in the sparse zone the rate is worsened by the term $n^{\delta }$.
Note that for convenience our arguments are implemented for integer values $%
p\in \mathbb{N}$. Other real values can be dealt with by interpolation, but
we omit to do it for brevity's sake. Of course, the most relevant case for
practitioners is $p=2,\,$in which case the function certainly belongs to the
regular zone, where our rates are optimal, see also \cite{caibloc2}, \cite%
{donoho2}.
\end{remark}

As in \cite{bkmpAoSb} and \cite{dgm}, the minimax rates are not affected by
the construction over the sphere, which instead is pivotal in the
development of statistical procedures. As mentioned in the Introduction, the
proof of this Theorem makes extensive use of standard techniques (see for
instance \cite{WASA}), modified to exploit the properties of the needlets
described in Section \ref{sec:need}. The procedure is therefore close to the
ones employed in \cite{bkmpAoSb} and \cite{dgm}, the main differences
concerning the probabilistic inequalities in Section \ref{sec:aux} and some
of the pivotal steps of the proof remarked in Section \ref{sec:proof}.

\section{Auxiliary Results\label{sec:aux}}

This Section collects the probabilistic inequalities necessary to prove
Theorem \ref{maintheorem}.

\begin{lemma}
\label{mainlemma} Consider $\widehat{\beta }_{jk}$ as described in \ref%
{needcoeff2}. There exist constants $C_{p},C_{\infty },C_{A}$ such that, for 
$B^{j}\leq n^{\frac{1}{2}}$, $j=0,...,J_{n}$,%
\begin{equation}
\mathbb{E}\left[ \left\vert \widehat{\beta }_{jk}-\beta _{jk}\right\vert ^{p}%
\right] \leq C_{p}n^{-p/2}\text{ , }p\geq 1  \label{E_p}
\end{equation}%
\begin{equation}
\mathbb{E}\left[ \sup_{k=1,...,N_{j}}\left\vert \widehat{\beta }_{jk}-\beta
_{jk}\right\vert ^{p}\right] \leq C_{\infty }\left( j+1\right) ^{p}n^{-p/2}%
\text{, }p\geq 1\text{ ,}  \label{E_inf}
\end{equation}%
and for all $\gamma >0,$ $p\in \mathbb{N}$, there exists $\kappa >0$ such
that%
\begin{equation}
\mathbb{P}\left( \left\vert \widehat{A}_{js;p}-A_{js;p}\right\vert >\kappa
t_{n}^{p}\right) \leq C_{p,\gamma }\frac{1}{n^{\gamma }}\text{ .}
\label{P_A}
\end{equation}
\end{lemma}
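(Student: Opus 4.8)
The plan is to establish the three bounds separately, as they concern related but distinct quantities. The key observation underlying all three is that the noise terms $\varepsilon_{jk;n}$ are jointly Gaussian with mean zero and variance $\tfrac{1}{n}\|\psi_{jk}\|_{L^2}^2$, and by the norm bounds \eqref{boundnorm} with $p=2$ we have $\|\psi_{jk}\|_{L^2}^2 \approx B^{j(1-1)} = 1$, so that $\mathbb{E}\varepsilon_{jk;n}^2 \approx \tfrac{1}{n}$ uniformly in $j,k$. This is the quantitative fact that drives every estimate.

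For the first bound \eqref{E_p}, I would simply note that $\widehat{\beta}_{jk} - \beta_{jk} = \varepsilon_{jk;n}$ is a centered Gaussian random variable, so its $p$-th absolute moment is controlled by the standard Gaussian moment identity $\mathbb{E}|\varepsilon_{jk;n}|^p = C_p (\mathbb{E}\varepsilon_{jk;n}^2)^{p/2}$. Since the variance is $\approx n^{-1}$, this yields $\mathbb{E}|\widehat{\beta}_{jk}-\beta_{jk}|^p \leq C_p n^{-p/2}$ directly, where the constant absorbs the Gaussian moment factor and the $L^2$-norm bound on $\psi_{jk}$.

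For the supremum bound \eqref{E_inf}, I would use a maximal inequality for Gaussian families. The quantity $\sup_{k=1,\dots,N_j}|\varepsilon_{jk;n}|$ is the maximum of $N_j \approx B^{2j}$ centered Gaussian variables each of variance $\approx n^{-1}$. The classical bound on the expected maximum of $N$ (not necessarily independent) Gaussians of variance $\sigma^2$ gives $\mathbb{E}\sup |\varepsilon_{jk;n}| \leq C \sigma \sqrt{\log N_j}$, and since $\log N_j \approx 2j \log B \approx j+1$, raising to the $p$-th power (via the analogous moment version of the maximal inequality, or by a union bound over Gaussian tails) produces the factor $(j+1)^p n^{-p/2}$. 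The logarithmic loss in the number of coefficients is exactly what creates the $(j+1)^p$ term, and this is the expected and standard phenomenon in thresholding arguments.

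The third bound \eqref{P_A} is where I expect the real work to lie, and it is the main obstacle. Here $\widehat{A}_{js} - A_{js}$ (I read this as $\widehat{A}_{js;p} - A_{js;p}$) is the block-averaged deviation $\tfrac{1}{\ell_j}\sum_{k\in R_{j;s}}(\widehat{\beta}_{jk}^p - \beta_{jk}^p)$, which is a polynomial of degree $p$ in the Gaussian noise variables and hence no longer Gaussian. The plan is to expand $\widehat{\beta}_{jk}^p = (\beta_{jk}+\varepsilon_{jk;n})^p$ binomially and control the resulting polynomial chaos. For the dominant fluctuation terms I would apply a concentration inequality for Gaussian polynomials (for instance a Hanson--Wright-type bound, or hypercontractivity estimates for Wiener chaos of fixed order $p$), which give sub-exponential (more precisely, sub-Weibull of order $2/p$) tail decay. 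Since the threshold is at scale $\kappa t_n^p = \kappa n^{-p/2}$ and the typical deviation is of the same order $n^{-p/2}$, choosing $\kappa$ large forces the tail probability to decay faster than any fixed polynomial rate $n^{-\gamma}$. The delicate points are: controlling the correlations $\mathbb{E}\varepsilon_{jk_1;n}\varepsilon_{jk_2;n}$ within a block (these are nonzero but decay with geodesic distance via the localization \eqref{localization}), ensuring the block size $\ell_j \approx N_j^{\eta}$ does not degrade the concentration, and verifying that the constant $\kappa$ can be chosen uniformly so that the bound holds for every prescribed $\gamma$.
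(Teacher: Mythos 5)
Your handling of \eqref{E_p} and \eqref{E_inf} is correct and essentially the same as the paper's: the exact Gaussian moment identity for the former, and a union bound over the $N_{j}\approx B^{2j}$ Gaussian tails for the latter (the paper splits the tail integral at $x\approx jn^{-1/2}$ and uses Mill's inequality; your maximal-inequality phrasing yields the required $(j+1)^{p}$ factor, indeed slightly better).

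The genuine gap is in \eqref{P_A}, exactly where you located the real work. Your concentration mechanism is wrong: you claim that the typical deviation of $\widehat{A}_{js}-A_{js}$ is of the same order $n^{-p/2}$ as the threshold, and that taking the constant $\kappa$ large then forces decay faster than any polynomial rate. If the fluctuation scale and the threshold were genuinely of the same order, no fixed multiple $\kappa$ could even make the probability tend to zero: for any random variable $X$, $\mathbb{P}\left( \left\vert X\right\vert >\kappa \,\mathrm{sd}(X)\right) $ is a constant in $n$. (Concretely, for a block reduced to a single coefficient, $\mathbb{P}\left( \left\vert \varepsilon _{jk;n}\right\vert ^{p}>\kappa n^{-p/2}\right) =\mathbb{P}\left( \left\vert Z\right\vert >c\kappa ^{1/p}\right) $, which does not decay at all.) The decay in \eqref{P_A} comes from the block averaging itself, which your sketch treats only as a possible nuisance (``ensuring the block size $\ell _{j}$ does not degrade the concentration'') rather than as the engine. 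After the rescaling $\varepsilon _{jk}=\sqrt{n}\,\varepsilon _{jk;n}$, the pure-noise term of the binomial expansion is $n^{-p/2}\ell _{j}^{-1}\sum_{k}\varepsilon _{jk}^{p}$; since the $\varepsilon _{jk}$ within a block are weakly correlated (needlet localization), the centered average $\ell _{j}^{-1}\sum_{k}\left( \varepsilon _{jk}^{p}-\mathbb{E}\varepsilon _{jk}^{p}\right) $ has fluctuations of order $\ell _{j}^{-1/2}$, so the threshold exceeds the noise scale by the diverging factor $\ell _{j}^{1/2}$. This is precisely what the paper's Lemma \ref{epsilonlemma} exploits: a Hermite expansion of $\varepsilon _{jk}^{p}$, the correlation decay, and Markov's inequality with moments of arbitrary order $\gamma $ give $\mathbb{P}\left\{ \ell _{j}^{-1}\sum_{k}\left\vert \varepsilon _{jk}\right\vert ^{p}>\kappa \right\} \leq C_{p,\gamma }\ell _{j}^{-\gamma /2}$, which becomes $O(n^{-\gamma ^{\prime }})$ for any $\gamma ^{\prime }$ because $\ell _{j}\approx N_{j}^{\eta }\approx B^{2j\eta }$ is a positive power of $n$ at the relevant resolution levels; the arbitrariness of the polynomial rate comes from taking arbitrarily high moments, not from enlarging $\kappa $. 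Your Hanson--Wright/hypercontractivity route can be made to work, but only if run at deviation level $\kappa \ell _{j}^{1/2}$ rather than $\kappa $.

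A second omission: the mixed terms of the expansion, $\binom{p}{i}n^{i/2}\ell _{j}^{-1}\sum_{k}\beta _{jk}^{i}\varepsilon _{jk}^{p-i}$ for $1\leq i\leq p-1$, carry powers of $\sqrt{n}\beta _{jk}$ and cannot be handled by noise concentration alone. The paper controls them by Cauchy--Schwarz together with a decay bound on $\sum_{k}\beta _{jk}^{2p-2}$ coming from the regularity of $f$, showing they are of smaller order than the pure-noise term. Your sketch never invokes any bound on the needlet coefficients $\beta _{jk}$, so these terms are left uncontrolled.
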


\begin{proof}
First of all, consider that, from Equations (\ref{needcoeff2}) and (\ref%
{varnoise}), we have 
\begin{eqnarray*}
\mathbb{E}\left( \left\vert \widehat{\beta }_{jk}-\beta _{jk}\right\vert
^{p}\right) &=&\mathbb{E}\left( \left\vert \varepsilon _{jk;n}\right\vert
^{p}\right) \\
&=&\left( Var\left( \varepsilon _{jk;n}\right) \right) ^{\frac{p}{2}}\frac{%
2^{\frac{p}{2}}\Gamma \left( \frac{p+1}{2}\right) }{\sqrt{\pi }} \\
\text{ } &=&\frac{1}{n^{\frac{p}{2}}}\left\Vert \psi _{jk}\right\Vert
_{L^{2}(\mathbb{S}^{d})}^{p}\frac{2^{\frac{p}{2}}\Gamma \left( \frac{p+1}{2}%
\right) }{\sqrt{\pi }} \\
&=&O\left( n^{-\frac{p}{2}}\right) \text{ ,}\ 
\end{eqnarray*}%
to obtain (\ref{E_p}). Now, for Mill's inequality, if $Z\sim N\left(
0,1\right) $, we have $\mathbb{P}\left( \left\vert Z\right\vert \geq
x\right) \leq \sqrt{2/\pi }\exp \left( -x^{2}/2\right) /x\,$. Hence, from (%
\ref{varnoise}), we obtain 
\begin{eqnarray*}
\mathbb{P}\left( \left\vert \varepsilon _{jk;n}\right\vert \geq x\right) &=&2%
\mathbb{P}\left( \left\vert Z\right\vert \geq \sqrt{n}x\right) \\
&\leq &\sqrt{\frac{2}{\pi }}\frac{e^{-\frac{nx^{2}}{2}}}{\sqrt{n}x} \\
&\leq &C_{\varepsilon }e^{-\frac{nx^{2}}{2}}\text{ .}
\end{eqnarray*}%
On the other hand, we have%
\begin{eqnarray*}
\mathbb{E}\left[ \sup_{k=1,...,N_{j}}\left\vert \widehat{\beta }_{jk}-\beta
_{jk}\right\vert ^{p}\right] &=&\int_{\mathbb{R}^{+}}x^{p-1}\mathbb{P}\left(
\sup_{k=1,...,N_{j}}\left\vert \widehat{\beta }_{jk}-\beta _{jk}\right\vert
\geq x\right) dx \\
&=&\int_{\mathbb{R}^{+}}x^{p-1}\mathbb{P}\left(
\sup_{k=1,...,N_{j}}\left\vert \varepsilon _{jk;n}\right\vert \geq x\right)
dx \\
&=&E_{1}+E_{2}\text{ ,}
\end{eqnarray*}%
where%
\begin{eqnarray*}
E_{1} &=&\int_{0\leq x\leq \frac{2\sqrt{2}}{\sqrt{n}}j}x^{p-1}dx\text{ ,} \\
E_{2} &=&C\int_{x>\frac{2\sqrt{2}}{\sqrt{n}}j}x^{p-1}B^{2j}\max_{k}\mathbb{P}%
\left( \left\vert \varepsilon _{jk;n}\right\vert \geq x\right) dx\text{ .}
\end{eqnarray*}%
We can easily see that 
\begin{equation*}
E_{1}=C_{1}j^{p}n^{-\frac{p}{2}}\text{ ,}
\end{equation*}%
while on the other hand, considering that for $x>2\sqrt{2/n}j$ 
\begin{equation*}
B^{2j}e^{-\frac{nx^{2}}{2}}\leq e^{-\frac{nx^{2}}{4}-\frac{nx^{2}}{4}%
+2j}\leq e^{-\frac{nx^{2}}{4}}\text{,}
\end{equation*}%
we obtain 
\begin{eqnarray*}
E_{2} &\leq &C\int_{x>\frac{2\sqrt{2}}{\sqrt{n}}j}x^{p-1}B^{2j}e^{-\frac{%
nx^{2}}{2}}dx \\
&\leq &C_{2}n^{-\frac{p}{2}}\text{,}
\end{eqnarray*}%
so we achieve (\ref{E_inf}). In order to prove (\ref{P_A}), we write%
\begin{equation*}
\mathbb{P}\left( \left\vert \widehat{A}_{js;p}-A_{js;p}\right\vert >\kappa
t_{n}^{p}\right)
\end{equation*}%
\begin{equation*}
=\mathbb{P}\left\{ \left( \frac{1}{\ell _{j}}\sum_{k=1}^{\ell _{j}}\left( 
\widehat{\beta }_{jk}^{p}-\mathbb{E}\widehat{\beta }_{jk}^{p}\right) \right)
^{1/p}>\frac{\kappa }{\sqrt{n}}\right\} \text{ .}
\end{equation*}%
Define%
\begin{equation*}
\widetilde{\beta }_{jk}:=\sqrt{n}\beta _{jk}+\sqrt{n}\varepsilon _{jk;n}=%
\sqrt{n}\beta _{jk}+\varepsilon _{jk}\text{ ,}
\end{equation*}%
where 
\begin{equation*}
\varepsilon _{jk}:=\sqrt{n}\varepsilon _{jk;n}\text{ ;}
\end{equation*}%
our aim is hence to study the behaviour of the terms of the form 
\begin{equation}
\left( \frac{1}{\ell _{j}}\sum_{k=1}^{\ell _{j}}\varepsilon _{jk}^{p}+\frac{p%
\sqrt{n}}{\ell _{j}}\sum_{k=1}^{\ell _{j}}\beta _{jk}\varepsilon
_{jk}^{p-1}+...+\frac{pn^{(p-1)/2}}{\ell _{j}}\sum_{k=1}^{\ell _{j}}\beta
_{jk}^{p-1}\varepsilon _{jk}\right) ^{1/p}\text{.}  \label{pino}
\end{equation}%
Observe that:%
\begin{equation*}
\frac{1}{\ell _{j}}\sum_{k=1}^{\ell _{j}}\beta _{jk}^{p-1}\varepsilon
_{jk}\leq \left( \frac{1}{\ell _{j}}\sum_{k=1}^{\ell _{j}}\beta
_{jk}^{2p-2}\right) ^{1/2}\left( \frac{1}{\ell _{j}}\sum_{k=1}^{\ell
_{j}}\varepsilon _{jk}^{2}\right) ^{1/2}\text{ ;}
\end{equation*}%
we have that%
\begin{equation*}
\sum_{k=1}^{\ell _{j}}\beta _{jk}^{2p-2}\leq \sum_{k=1}^{N_{j}}\beta
_{jk}^{2p-2}=O\left( B^{-js}B^{-j\frac{d}{2}(1-\frac{1}{p-1})}\right)
=O\left( B^{-js}B^{-j\frac{d}{2}(\frac{p-2}{p-1})}\right) \text{ .}
\end{equation*}%
\begin{equation*}
O\left( B^{-js}B^{-j\frac{d}{2}(\frac{p-2}{p-1})}\right)
\end{equation*}%
On the other hand, by Lemma \ref{epsilonlemma}, for all $p,\gamma >0$, there
exists $\kappa >0$ such that 
\begin{equation*}
\mathbb{P}\left\{ \frac{1}{\ell _{j}}\sum_{k=1}^{\ell _{j}}\left\vert
\varepsilon _{jk}\right\vert ^{p}>\kappa \right\} \leq \frac{C_{p,\gamma }}{%
\ell _{j}^{\gamma /2}}\text{ .}
\end{equation*}%
Hence, we obtain 
\begin{equation*}
\frac{pn^{(p-1)/2}}{\ell _{j}}\sum_{k=1}^{\ell _{j}}\beta
_{jk}^{p-1}\varepsilon _{jk}\leq C\frac{n^{(p-1)/2}}{\ell _{j}^{\frac{\gamma
+2}{2}}}B^{-j(\frac{d}{2}\frac{p-2}{p-1}+s)}\text{ .}
\end{equation*}%
By choosing suitable $s$ and $\gamma $, we have 
\begin{equation*}
\frac{pn^{(p-1)/2}}{\ell _{j}}\sum_{k=1}^{\ell _{j}}\beta
_{jk}^{p-1}\varepsilon _{jk}=o\left( \frac{1}{\ell _{j}}\sum_{k=1}^{\ell
_{j}}\varepsilon _{jk}^{p}\right) \text{ .}
\end{equation*}%
The same holds for all the other mixed terms in Equation (\ref{pino}).
\end{proof}

\begin{lemma}
\label{epsilonlemma}Assume that $\mathbb{E}\varepsilon _{jk}=0,$ $\mathbb{E}%
\varepsilon _{jk}^{2}=1,$ and%
\begin{equation*}
\mathbb{E}\varepsilon _{jk_{1}}\varepsilon _{jk_{2}}\leq \frac{C_{M}}{%
\left\{ 1+B^{\frac{d}{2}j}d(\xi _{jk_{1}},\xi _{jk_{2}})\right\} ^{M}}\text{
, for all }M>0\text{ .}
\end{equation*}%
For all $p\in \mathbb{N},$ $\gamma >0$ there exists $\kappa >0$ such that%
\begin{equation*}
\mathbb{P}\left\{ \frac{1}{\ell _{j}}\sum_{k=1}^{\ell _{j}}\left\vert
\varepsilon _{jk}\right\vert ^{p}>\kappa \right\} \leq \frac{C_{p,\gamma }}{%
\ell _{j}^{\gamma /2}}\text{ .}
\end{equation*}
\end{lemma}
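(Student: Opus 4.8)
The plan is to reduce the tail bound to a high-moment estimate and then exploit the near-diagonality of the Gaussian covariance. Write $\mu _{p}=\mathbb{E}|\varepsilon _{jk}|^{p}=2^{p/2}\Gamma ((p+1)/2)/\sqrt{\pi }$ for the common $p$-th absolute moment of a standard Gaussian and set $Y_{k}=|\varepsilon _{jk}|^{p}-\mu _{p}$, so that $\frac{1}{\ell _{j}}\sum_{k}|\varepsilon _{jk}|^{p}-\mu _{p}=\frac{1}{\ell _{j}}\sum_{k}Y_{k}$. Fixing any $\kappa >\mu _{p}$, the event in the statement is a deviation above the mean, and Markov's inequality applied to the $(2N)$-th power gives, for every integer $N\geq 1$,
\[
\Pr \left\{ \frac{1}{\ell _{j}}\sum_{k=1}^{\ell _{j}}|\varepsilon _{jk}|^{p}>\kappa \right\} \leq (\kappa -\mu _{p})^{-2N}\,\mathbb{E}\left[ \left( \frac{1}{\ell _{j}}\sum_{k=1}^{\ell _{j}}Y_{k}\right) ^{2N}\right] .
\]
It therefore suffices to prove the moment bound $\mathbb{E}[(\sum_{k=1}^{\ell _{j}}Y_{k})^{2N}]\leq C_{p,N}\,\ell _{j}^{N}$ for each $N$, since then the right-hand side is $O(\ell _{j}^{-N})$ and the choice $N\geq \gamma /2$ yields the claim, with $C_{p,\gamma }$ absorbing $(\kappa -\mu _{p})^{-2N}$; note $\kappa $ may be taken to be any fixed number exceeding $\mu _{p}$.

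The first ingredient is the summability of the correlations. Writing $\rho _{k_{1}k_{2}}=\mathbb{E}\varepsilon _{jk_{1}}\varepsilon _{jk_{2}}$, I use the localization bound in absolute value, $|\rho _{k_{1}k_{2}}|\leq C_{M}(1+B^{j}d(\xi _{jk_{1}},\xi _{jk_{2}}))^{-M}$ for every $M$. Because the cubature points are $\varepsilon $-separated with $\varepsilon \approx B^{-j}$ (see (\ref{lambdaeN}) and the net property), the number of points falling in the geodesic annulus of radii between $iB^{-j}$ and $(i+1)B^{-j}$ about a fixed $\xi _{jk_{1}}$ grows only linearly in $i$; summing the decay over these shells with $M>2$ gives $\sup_{k_{1}}\sum_{k_{2}}|\rho _{k_{1}k_{2}}|\leq C$ uniformly in $j$. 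This ``almost diagonal'' structure is exactly what makes the block of $\ell _{j}$ Gaussian variables behave, at the level of moments, like $\ell _{j}$ independent ones.

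The core is the moment bound itself. Each $Y_{k}=g(\varepsilon _{jk})$ with $g(x)=|x|^{p}-\mu _{p}$ a centered element of the $L^{2}$ space of the standard Gaussian measure, so it admits a Hermite expansion $g=\sum_{m\geq 1}c_{m}\mathrm{He}_{m}$ with no constant term (Hermite rank at least one), and $\sum_{m}c_{m}^{2}\,m!=\mathbb{E}[g(\varepsilon _{jk})^{2}]<\infty $. Expanding $\mathbb{E}[g(\varepsilon _{k_{1}})\cdots g(\varepsilon _{k_{2N}})]$ by the diagram formula for Hermite polynomials of jointly Gaussian variables (a consequence of Wick's theorem) represents each mixed moment as a sum over graphs on $2N$ vertices in which every vertex has degree at least one and every edge carries a factor $\rho _{k_{a}k_{b}}$. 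Summing over $k_{1},\dots ,k_{2N}$ and using $\sum_{k}|\rho _{k_{1}k}|\leq C$ to contract edges along a spanning tree of each connected component (bounding the remaining edge factors by $|\rho |\leq 1$), a component on $v\geq 2$ vertices contributes at most a constant times $\ell _{j}$, namely one free root index with the remaining $v-1$ tied down by summable correlations. Since the $2N$ vertices split into at most $N$ connected components, each necessarily of size $\geq 2$ because the rank-one condition forbids isolated vertices, the full sum is $O(\ell _{j}^{N})$, the constant depending on $p$ and $N$ through the Hermite coefficients and the number of diagrams.

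The step I expect to be the main obstacle is making this diagram expansion rigorous when $p$ is odd, where $g(x)=|x|^{p}-\mu _{p}$ is not a polynomial: one must verify that its Hermite series converges strongly enough and that the resulting sum over diagrams is absolutely convergent. The clean way is to truncate the expansion at Hermite order $m\leq m_{0}$, run the combinatorial argument on the polynomial truncation (where the diagram formula is exact and finite), and control the tail $\sum_{m>m_{0}}c_{m}\mathrm{He}_{m}$ uniformly using $\sum_{m}c_{m}^{2}\,m!<\infty $ together with the correlation summability. Once the connectivity-versus-component counting is in place, the scaling $\ell _{j}^{N}$ is dictated purely by the graph combinatorics, and choosing $N$ large enough completes the proof.
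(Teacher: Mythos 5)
Your proposal follows, at its core, the same route as the paper's own proof: reduce the tail bound to a high-order moment estimate via Markov's inequality, expand the summands in Hermite polynomials, and control the mixed moments by a diagram/graph-counting argument in which the localization hypothesis yields $\sup_{k_{1}}\sum_{k_{2}}\left\vert E\varepsilon _{jk_{1}}\varepsilon _{jk_{2}}\right\vert \leq C$ uniformly in $j$, so that each connected component of a diagram carries exactly one free index and hence a factor $O\left( \ell _{j}\right) $; your count (no isolated vertices, hence at most $N$ components among $2N$ vertices, hence $O\left( \ell _{j}^{N}\right) $) is the same combinatorics as the paper's bound $\sum_{k_{1}...k_{\gamma }}E(\varepsilon _{jk_{1}}\varepsilon _{jk_{2}})\cdots E(\varepsilon _{jk_{\gamma }}\varepsilon _{jk_{1}})=O(\ell _{j})$. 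The one genuine difference is the point you yourself flag as the main obstacle — odd $p$, where $|x|^{p}$ is not a polynomial — and here the paper has a one-line trick that you missed: since by Jensen (or Cauchy--Schwarz)
\begin{equation*}
\Pr \left\{ \frac{1}{\ell _{j}}\sum_{k=1}^{\ell _{j}}\left\vert \varepsilon
_{jk}\right\vert ^{p}>\kappa \right\} \leq \Pr \left\{ \frac{1}{\ell _{j}}
\sum_{k=1}^{\ell _{j}}\varepsilon _{jk}^{2p}>\kappa ^{2}\right\} \text{ ,}
\end{equation*}
one may assume from the outset that the exponent is even. Then the relevant function is a polynomial, its Hermite expansion $\varepsilon ^{2p}=E\varepsilon ^{2p}+\sum_{\tau =1}^{2p}c_{\tau }H_{\tau }(\varepsilon )$ is finite, the diagram formula is exact with finitely many terms, and the convergence question you worry about never arises. (The paper then takes a union bound over the finitely many Hermite orders and applies Markov to each block $\frac{1}{\ell _{j}}\sum_{k}H_{\tau }(\varepsilon _{jk})$ separately, rather than your single Markov bound on the centered sum; that difference is immaterial.) By contrast, your plan for odd $p$ — infinite Hermite series, truncation at order $m_{0}$, and a tail estimate "using $\sum_{m}c_{m}^{2}m!<\infty $ together with correlation summability" — is exactly the nontrivial step: carried out rigorously it amounts to an Arcones-type moment inequality for functionals of correlated Gaussian vectors, which is far more work than the reduction above, and as sketched it is not yet a proof. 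On the credit side, your explicit derivation of the correlation summability from the $B^{-j}$-separation of the cubature points is a detail the paper uses only implicitly, so that portion of your argument is actually more complete than the original.
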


\begin{proof}
Without loss of generality we can take $p$ to be even; note indeed that%
\begin{equation*}
\mathbb{P}\left\{ \frac{1}{\ell _{j}}\sum_{k=1}^{\ell _{j}}\left\vert
\varepsilon _{jk}\right\vert ^{p}>\kappa \right\} \leq \mathbb{P}\left\{ 
\frac{1}{\ell _{j}}\sum_{k=1}^{\ell _{j}}\varepsilon _{jk}^{2p}>\kappa
^{2}\right\} \text{ .}
\end{equation*}%
Let us rewrite%
\begin{equation*}
\frac{1}{\ell _{j}}\sum_{k=1}^{\ell _{j}}\varepsilon _{jk}^{p}=\mathbb{E}%
\varepsilon _{jk}^{p}+\sum_{\tau =1}^{p}c_{\tau }H_{\tau }(\varepsilon _{jk})%
\text{ , }
\end{equation*}%
whence%
\begin{equation*}
\mathbb{P}\left\{ \frac{1}{\ell _{j}}\sum_{k=1}^{\ell _{j}}\varepsilon
_{jk}^{p}>p(\kappa +E\varepsilon _{jk}^{p})\right\} \leq \sum_{\tau =1}^{p}%
\mathbb{P}\left\{ \frac{1}{\ell _{j}}\sum_{k=1}^{\ell _{j}}H_{\tau
}(\varepsilon _{jk})>\frac{\kappa }{c_{\tau }}\right\} \text{ .}
\end{equation*}%
By the Markov's inequality, the result will hence follow if we prove that%
\begin{equation*}
\mathbb{E}\left[ \frac{1}{\ell _{j}}\sum_{k=1}^{\ell _{j}}H_{\tau
}(\varepsilon _{jk})\right] ^{\gamma }\leq \frac{C}{\ell _{j}^{\gamma /2}}%
\text{ .}
\end{equation*}%
Now let us take for notational simplicity $\tau =2;$ the argument for the
other terms is identical. We have%
\begin{equation*}
\mathbb{E}\left[ \frac{1}{\ell _{j}}\sum_{k=1}^{\ell _{j}}H_{\tau
}(\varepsilon _{jk})\right] ^{\gamma }=\frac{1}{\ell _{j}^{\gamma }}%
\sum_{k_{1},..,k_{\gamma }=1}^{\ell _{j}}\mathbb{E}\left\{ H_{\tau
}(\varepsilon _{jk_{1}})...H_{\tau }(\varepsilon _{jk_{\gamma }})\right\}
\end{equation*}%
\begin{equation*}
=\frac{1}{\ell _{j}^{\gamma }}\left\{ \sum_{k_{1}k_{2}}^{\ell _{j}}\left[ 
\mathbb{E}(\varepsilon _{jk_{1}}\varepsilon _{jk_{2}})\right] ^{2}\right\}
^{\gamma /2}
\end{equation*}%
\begin{equation*}
+\frac{1}{\ell _{j}^{\gamma }}\left\{ \sum_{k_{1}k_{2}}^{\ell _{j}}\left[ 
\mathbb{E}(\varepsilon _{jk_{1}}\varepsilon _{jk_{2}})\right] ^{2}\right\} ^{%
\frac{\gamma }{2}-2}\left\{ \sum_{k_{1}...k_{4}}^{\ell _{j}}\mathbb{E}%
(\varepsilon _{jk_{1}}\varepsilon _{jk_{2}})\mathbb{E}(\varepsilon
_{jk_{2}}\varepsilon _{jk_{3}})\mathbb{E}(\varepsilon _{jk_{3}}\varepsilon
_{jk_{4}})\mathbb{E}(\varepsilon _{jk_{4}}\varepsilon _{jk_{1}})\right\}
\end{equation*}%
\begin{equation*}
+\frac{1}{\ell _{j}^{\gamma }}\left\{ \sum_{k_{1}k_{2}}^{\ell _{j}}\left[ 
\mathbb{E}(\varepsilon _{jk_{1}}\varepsilon _{jk_{2}})\right] ^{2}\right\} ^{%
\frac{\gamma }{2}-4}\left\{ \sum_{k_{1}...k_{6}}^{\ell _{j}}\mathbb{E}%
(\varepsilon _{jk_{1}}\varepsilon _{jk_{2}})...\mathbb{E}(\varepsilon
_{jk_{6}}\varepsilon _{jk_{1}})\right\}
\end{equation*}%
\begin{equation*}
+\frac{1}{\ell _{j}^{\gamma }}\left\{ \sum_{k_{1}...k_{\gamma }}^{\ell _{j}}%
\mathbb{E}(\varepsilon _{jk_{1}}\varepsilon _{jk_{2}})...\mathbb{E}%
(\varepsilon _{jk_{\gamma }}\varepsilon _{jk_{1}})\right\}
\end{equation*}%
\begin{equation*}
=O(\ell _{j}^{-\gamma /2})+O(\ell _{j}^{-\frac{\gamma }{2}-1})+...+O(\ell
_{j}^{-\gamma +1})\text{ ,}
\end{equation*}%
because%
\begin{eqnarray*}
\sum_{k_{1}...k_{\gamma }}^{\ell _{j}}E(\varepsilon _{jk_{1}}\varepsilon
_{jk_{2}})...E(\varepsilon _{jk_{\gamma }}\varepsilon _{jk_{1}}) &\leq
&\sum_{k_{1}...k_{q}}^{\ell _{j}}\left\vert E(\varepsilon
_{jk_{1}}\varepsilon _{jk_{2}})\right\vert ...\left\vert E(\varepsilon
_{jk_{\gamma -1}}\varepsilon _{jk_{\gamma }})\right\vert \\
&\leq &\ell _{j}\left\{ \sum_{k_{2}}^{\ell _{j}}\left\vert E(\varepsilon
_{jk_{1}}\varepsilon _{jk_{2}})\right\vert \right\} ^{\gamma -1}=O(\ell _{j})%
\text{ .}
\end{eqnarray*}
\end{proof}

\section{Proof of Theorem \protect\ref{maintheorem} (upper bound) \label%
{sec:proof}}

This Section will describe in details the proof of the Theorem \ref%
{maintheorem}. As previously mentioned, some of the passages of this proof
will be very close to those developed for local thresholding described in 
\cite{bkmpAoSb} and \cite{dgm}, hence we will omit them. First of all,
observe that%
\begin{equation*}
\sum_{s=1}^{S_{j}}\sum_{k\in R_{j;s}}=\sum_{k=1}^{N_{j}}\text{ .}
\end{equation*}%
Standard calculations (see for instance \cite{WASA}) lead to: 
\begin{eqnarray*}
\mathbb{E}\left\Vert \widehat{f}-f\right\Vert _{L^{p}\left( \mathbb{S}%
^{d}\right) }^{p} &=&\mathbb{E}\left\Vert
\sum_{j=0}^{J_{n}}\sum_{s=1}^{S_{j}}\left( \sum_{k\in R_{j;s}}\widehat{\beta 
}_{jk}\psi _{jk}\right) w_{js;p}-\sum_{j\geq 0}\sum_{k=1}^{N_{j}}\beta
_{jk}\psi _{jk}\right\Vert _{L^{p}\left( \mathbb{S}^{d}\right) }^{p} \\
&=&\mathbb{E}\left\Vert \sum_{j=0}^{J_{n}}\sum_{s=1}^{S_{j}}\sum_{k\in
R_{j;s}}\left( w_{js;p}\widehat{\beta }_{jk}-\beta _{jk}\right) \psi
_{jk}-\sum_{j>J_{n}}\sum_{k=1}^{N_{j}}\beta _{jk}\psi _{jk}\right\Vert
_{L^{p}\left( \mathbb{S}^{d}\right) }^{p} \\
&\leq &2^{p-1}\left( \mathbb{E}\left\Vert
\sum_{j=0}^{J_{n}}\sum_{s=1}^{S_{j}}\sum_{k\in R_{j;s}}\left( w_{js;p}%
\widehat{\beta }_{jk}-\beta _{jk}\right) \psi _{jk}\right\Vert _{L^{p}\left( 
\mathbb{S}^{d}\right) }^{p}\right. \\
&&+\left. \left\Vert \sum_{j>J_{n}}\sum_{k=1}^{N_{j}}\beta _{jk}\psi
_{jk}\right\Vert _{L^{p}\left( \mathbb{S}^{d}\right) }^{p}\right) \\
&=&:I+II\text{ .}
\end{eqnarray*}%
Consider now the two different cases mentioned in Section \ref{sec:minimax}.

\textbf{CASE\ I:} \emph{Regular\ Case}

Consider $p<+\infty $. For $p\leq \pi $, we have $\mathcal{B}_{\pi
q}^{r}\subset \mathcal{B}_{pq}^{r}$: we therefore take $\pi =p\,$. Consider
instead the case $p>\pi $: we use the embedding $\mathcal{B}_{\pi
q}^{r}\subset \mathcal{B}_{pq}^{r-d\left( \frac{1}{p}-\frac{1}{\pi }\right)
} $, and moreover we assume%
\begin{equation*}
r\geq \frac{d}{p},\frac{r}{2r+d}=\frac{rp}{\left( 2r+d\right) p}\leq \frac{%
r\pi }{dp}\text{ ,}
\end{equation*}%
we have as in \cite{bkmpAoSb}, \cite{dgm}, that%
\begin{equation*}
II\leq O\left( n^{-\frac{pr}{2r+d}}\right) \text{ ,}
\end{equation*}%
as claimed.

About the variance term, from the Lo\`{e}ve's inequality we have%
\begin{eqnarray*}
I &\leq &C\mathbb{E}\left\Vert
\sum_{j=0}^{J_{n}}\sum_{s=1}^{S_{j}}\sum_{k\in R_{j;s}}\left( w_{js;p}%
\widehat{\beta }_{jk}-\beta _{jk}\right) \psi _{jk}\right\Vert _{L^{p}\left( 
\mathbb{S}^{d}\right) }^{p} \\
&\leq &CJ_{n}^{p-1}\sum_{j\leq J_{n}}\mathbb{E}\left\Vert
\sum_{s=1}^{S_{j}}\sum_{k\in R_{j;s}}\left( w_{js;p}\widehat{\beta }%
_{jk}-\beta _{jk}\right) \psi _{jk}\right\Vert _{L^{p}\left( \mathbb{S}%
^{d}\right) }^{p}\text{ .}
\end{eqnarray*}%
As described in \cite{bkmpAoSb}, see also \cite{marpecbook}, we have the
following needlet property:%
\begin{equation*}
\mathbb{E}\left\Vert \sum_{k}\alpha _{k}\psi _{jk}\right\Vert _{L^{p}\left( 
\mathbb{S}^{d}\right) }^{p}=\left\Vert \psi _{jk}\right\Vert _{L^{p}\left( 
\mathbb{S}^{d}\right) }^{p}\sum_{k}\mathbb{E}\left\Vert \alpha
_{k}\right\Vert _{L^{p}\left( \mathbb{S}^{d}\right) }^{p}\text{ .}
\end{equation*}%
Hence, we obtain 
\begin{equation}
\sum_{j\leq J_{n}}\mathbb{E}\left\Vert \sum_{s=1}^{S_{j}}\sum_{k\in
R_{j;s}}\left( w_{js;p}\widehat{\beta }_{jk}-\beta _{jk}\right) \psi
_{jk}\right\Vert _{L^{p}\left( \mathbb{S}^{d}\right) }^{p}  \label{pipi}
\end{equation}%
\begin{eqnarray*}
&=&\sum_{j\leq J_{n}}\mathbb{E}\left\Vert \sum_{s=1}^{S_{j}}\sum_{k\in
R_{j;s}}\left( w_{js;p}\widehat{\beta }_{jk}-\beta _{jk}\right) \psi
_{jk}I\left( \left\vert \widehat{A}_{js;p}\right\vert \geq t_{n}^{p}\right)
I\left( \left\vert A_{js;p}\right\vert \geq \frac{t_{n}^{p}}{2}\right)
\right\Vert _{L^{p}\left( \mathbb{S}^{d}\right) }^{p} \\
&&+\sum_{j\leq J_{n}}\mathbb{E}\left\Vert \sum_{s=1}^{S_{j}}\sum_{k\in
R_{j;s}}\left( w_{js;p}\widehat{\beta }_{jk}-\beta _{jk}\right) \psi
_{jk}I\left( \left\vert \widehat{A}_{js;p}\right\vert \geq t_{n}^{p}\right)
I\left( \left\vert A_{js;p}\right\vert <\frac{t_{n}^{p}}{2}\right)
\right\Vert _{L^{p}\left( \mathbb{S}^{d}\right) }^{p} \\
&&+\sum_{j\leq J_{n}}\mathbb{E}\left\Vert \sum_{s=1}^{S_{j}}\sum_{k\in
R_{j;s}}\left( w_{js;p}\widehat{\beta }_{jk}-\beta _{jk}\right) \psi
_{jk}I\left( \left\vert \widehat{A}_{js;p}\right\vert <t_{n}^{p}\right)
I\left( \left\vert A_{js;p}\right\vert \geq 2t_{n}^{p}\right) \right\Vert
_{L^{p}\left( \mathbb{S}^{d}\right) }^{p} \\
&&+\sum_{j\leq J_{n}}\mathbb{E}\left\Vert \sum_{s=1}^{S_{j}}\sum_{k\in
R_{j;s}}\left( w_{js;p}\widehat{\beta }_{jk}-\beta _{jk}\right) \psi
_{jk}I\left( \left\vert \widehat{A}_{js;p}\right\vert <t_{n}^{p}\right)
I\left( \left\vert A_{js;p}\right\vert <2t_{n}^{p}\right) \right\Vert
_{L^{p}\left( \mathbb{S}^{d}\right) }^{p}
\end{eqnarray*}%
\begin{eqnarray*}
&\leq &C\left\{ \sum_{j\leq J_{n}}\sum_{s=1}^{S_{j}}\sum_{k\in
R_{j;s}}\left\Vert \psi _{jk}\right\Vert _{L^{p}\left( \mathbb{S}^{d}\right)
}^{p}\right. \mathbb{E}\left[ \left( \widehat{\beta }_{jk}-\beta
_{jk}\right) ^{p}I\left( \left\vert \widehat{A}_{js;p}\right\vert \geq
t_{n}^{p}\right) I\left( \left\vert A_{js;p}\right\vert \geq \frac{t_{n}^{p}%
}{2}\right) \right] \\
&&+\sum_{j\leq J_{n}}\sum_{s=1}^{S_{j}}\sum_{k\in R_{j;s}}\left\Vert \psi
_{jk}\right\Vert _{L^{p}\left( \mathbb{S}^{d}\right) }^{p}\mathbb{E}\left[
\left( \widehat{\beta }_{jk}-\beta _{jk}\right) ^{p}I\left( \left\vert 
\widehat{A}_{js;p}\right\vert \geq t_{n}^{p}\right) I\left( \left\vert
A_{js;p}\right\vert <\frac{t_{n}^{p}}{2}\right) \right] \\
&&+\sum_{j\leq J_{n}}\sum_{s=1}^{S_{j}}\sum_{k\in R_{j;s}}\left\Vert \psi
_{jk}\right\Vert _{L^{p}\left( \mathbb{S}^{d}\right) }^{p}\left\vert \beta
_{jk}\right\vert ^{p}\mathbb{E}\left[ I\left( \left\vert \widehat{A}%
_{js;p}\right\vert <t_{n}^{p}\right) I\left( \left\vert A_{js;p}\right\vert
\geq 2t_{n}^{p}\right) \right] \\
&&+\left. \sum_{j\leq J_{n}}\sum_{s=1}^{S_{j}}\sum_{k\in R_{j;s}}\left\Vert
\psi _{jk}\right\Vert _{L^{p}\left( \mathbb{S}^{d}\right) }^{p}\left\vert
\beta _{jk}\right\vert ^{p}\mathbb{E}\left[ I\left( \left\vert \widehat{A}%
_{js;p}\right\vert <t_{n}^{p}\right) I\left( \left\vert A_{js;p}\right\vert
<2t_{n}^{p}\right) \right] \right\}
\end{eqnarray*}%
\begin{equation*}
=Aa+Au+Ua+Uu\text{ .}
\end{equation*}

The procedure follows these guidelines: we have to split (\ref{pipi}) into
four terms: in one of them, $Aa$, both the $\widehat{A}_{js;p}$ and $%
A_{js;p} $ are supposed to be bigger than the respective threshold; in
another one, $Uu$, they are both smaller and in the last two of them, $Au$
and $Ua$, the distance between $\widehat{A}_{js;p}$ and $A_{js;p}$ is shown
to be bigger than a suitable threshold. In the first two cases, in order to
achieve the minimax rate of convergence, we will split these terms into two
parts and we will show the convergence of each part by using mainly (\ref%
{boundnorm}), (\ref{E_p}) and (\ref{E_inf}). The convergence of the last two
terms will be instead proved by applying (\ref{P_A}).

Observe that%
\begin{eqnarray*}
Aa &\leq &C\sum_{j\leq J_{n}}\sum_{s=1}^{S_{j}}\sum_{k\in R_{j;s}}\left\Vert
\psi _{jk}\right\Vert _{L^{p}\left( \mathbb{S}^{d}\right) }^{p}\mathbb{E}%
\left[ \left\vert \widehat{\beta }_{jk}-\beta _{jk}\right\vert ^{p}\right]
I\left( \left\vert A_{js;p}\right\vert \geq \frac{t_{n}^{p}}{2}\right) \\
&\leq &C\sum_{j\leq J_{n}}\sum_{s=1}^{S_{j}}\sum_{k\in R_{j;s}}B^{dj\left( 
\frac{p}{2}-1\right) }I\left( \left\vert A_{js;p}\right\vert \geq \frac{%
t_{n}^{p}}{2}\right) \mathbb{E}\left[ \left\vert \widehat{\beta }_{jk}-\beta
_{jk}\right\vert ^{p}\right] \text{ .}
\end{eqnarray*}%
As in \cite{bkmpAoSb}, \cite{dgm}, we fix $J_{1n}$ such that 
\begin{equation*}
B^{J_{1n}}=O\left( n^{\frac{1}{2r+d}}\right) \text{ ;}
\end{equation*}%
simple calculations show that%
\begin{equation*}
\sum_{j=J_{1n}}^{J_{n}}\ell _{j}B^{dj\left( \frac{p}{2}-1\right)
}\sum_{s=1}^{S_{j}}I\left( \left\vert A_{js;p}\right\vert \geq \frac{%
t_{n}^{p}}{2}\right)
\end{equation*}%
\begin{eqnarray*}
&\leq &\sum_{j=J_{1n}}^{J_{n}}\ell _{j}B^{dj\left( \frac{p}{2}-1\right)
}\sum_{s=1}^{S_{j}}\left\vert A_{js;p}\right\vert \left( \frac{t_{n}^{p}}{2}%
\right) ^{-1} \\
&\leq &Ct_{n}^{-p}\sum_{j=J_{1n}}^{J_{n}}B^{dj\left( \frac{p}{2}-1\right)
}\sum_{k=1}^{N_{j}}\left\vert \beta _{jk}\right\vert ^{p} \\
&\leq &Cn^{\frac{p}{2}}\sum_{j=J_{1n}}^{J_{n}}\sum_{k_{1}=1}^{N_{j}}\left%
\vert \beta _{jk_{1}}\right\vert ^{p}B^{dj\left( \frac{p}{2}-1\right) }\text{
.}
\end{eqnarray*}%
Because $f\in \mathcal{B}_{pq}^{r}$, we have%
\begin{equation*}
\sum_{k=1}^{N_{j}}\left\vert \beta _{jk_{1}}\right\vert ^{p}B^{dj\left( 
\frac{p}{2}-1\right) }=C\sum_{k=1}^{N_{j}}\left\vert \beta
_{jk_{1}}\right\vert ^{p}\left\Vert \psi _{jk}\right\Vert _{p}^{p}\leq
CB^{-prj}\text{ ,}
\end{equation*}%
and, as in \cite{bkmpAoSb}, \cite{dgm} 
\begin{equation*}
n^{\frac{p}{2}}\sum_{j=J_{1n}}^{J_{n}}\sum_{k_{1}=1}^{N_{j}}\left\vert \beta
_{jk_{1}}\right\vert ^{p}B^{dj\left( \frac{p}{2}-1\right) }\leq Cn^{\frac{p}{%
2r+d}}\leq B^{pJ_{1n}}\text{ .}
\end{equation*}%
so that%
\begin{equation*}
\sum_{j=J_{1n}}^{J_{n}}\ell _{j}B^{dj\left( \frac{p}{2}-1\right)
}\sum_{s=1}^{S_{j}}I\left( \left\vert A_{js;p}\right\vert \geq \frac{%
t_{n}^{p}}{2}\right) \leq B^{pJ_{1n}}\text{.}
\end{equation*}

Hence, we obtain 
\begin{eqnarray*}
Aa &\leq &Cn^{-p/2}\left( \sum_{j\leq J_{1n}}\sum_{s=1}^{S_{j}}\ell
_{j}B^{dj\left( \frac{p}{2}-1\right) }I\left( \left\vert A_{js;p}\right\vert
\geq \frac{t_{n}^{p}}{2}\right) \right. \\
&&+\left. \sum_{j=J_{1n}}^{J_{n}}\sum_{s=1}^{S_{j}}\ell _{j}B^{dj\left( 
\frac{p}{2}-1\right) }I\left( \left\vert A_{js;p}\right\vert \geq \frac{%
t_{n}^{p}}{2}\right) \right) \\
&\leq &Cn^{-p/2}\left( \sum_{j\leq J_{1n}}B^{j\frac{d}{2}p}+B^{pJ_{1n}}%
\right) \\
&\leq &Cn^{-p/2}B^{pJ_{1n}}=Cn^{\frac{-pr}{2r+d}}\text{ .}
\end{eqnarray*}

Consider now the term $Uu$. We have that%
\begin{eqnarray*}
Uu &\leq &C\sum_{j\leq J_{n}}\sum_{s=1}^{S_{j}}\sum_{k\in R_{j;s}}\left\Vert
\psi _{jk}\right\Vert _{L^{p}\left( \mathbb{S}^{d}\right) }^{p}\left\vert
\beta _{jk}\right\vert ^{p}I\left( \left\vert A_{js;p}\right\vert
<2t_{n}^{p}\right) \\
&\leq &C\sum_{j\leq J_{n}}l_{j}B^{dj\left( \frac{p}{2}-1\right)
}\sum_{s=1}^{S_{j}}A_{js;p}I\left( \left\vert A_{js;p}\right\vert
<2t_{n}^{p}\right) \\
&\leq &C\left[ \sum_{j\leq J_{1n}}N_{j}B^{dj\left( \frac{p}{2}-1\right)
}2t_{n}^{p}+\sum_{j=J_{1n}}^{J_{n}}\sum_{k=1}^{N_{j}}\left\vert \beta
_{jk}\right\vert ^{p}\left\Vert \psi _{jk}\right\Vert _{L^{p}\left( \mathbb{S%
}^{d}\right) }^{p}\right] \\
&\leq &C\left[ n^{-\frac{p}{2}}B^{pJ_{1n}}+B^{-prJ_{1n}}\right] =O\left( n^{-%
\frac{pr}{2\left( r+1\right) }}\right) \text{ .}
\end{eqnarray*}

Let us study now $Au$ and $Ua$. As in \cite{bkmpAoSb}, \cite{dgm}, we have 
\begin{eqnarray}
Au &\leq &\sum_{j\leq J_{n}}\sum_{s=1}^{S_{j}}\sum_{k\in R_{j;s}}\left\Vert
\psi _{jk}\right\Vert _{L^{p}\left( \mathbb{S}^{d}\right) }^{p}\left( 
\mathbb{E}\left[ \left\vert \widehat{\beta }_{jk}-\beta _{jk}\right\vert
^{2p}\right] \right) ^{\frac{1}{2}}  \notag \\
&&\times \left( \mathbb{P}\left( \left\vert \widehat{A}_{js;p}-A_{js;p}%
\right\vert \geq \frac{\kappa n^{-\frac{p}{2}}}{2}\right) \right) ^{\frac{1}{%
2}}  \notag \\
&\leq &CB^{pJ_{n}}n^{-\frac{p}{2}}n^{-\gamma }\leq Cn^{-\gamma }\text{ ;} 
\notag \\
Ua &\leq &\sum_{j\leq J_{n}}\sum_{s=1}^{S_{j}}\sum_{k\in R_{j;s}}\left\Vert
\psi _{jk}\right\Vert _{L^{p}\left( \mathbb{S}^{d}\right) }^{p}\left\vert
\beta _{jk}\right\vert ^{p}\left( \mathbb{P}\left( \left\vert \widehat{A}%
_{js;p}-A_{js;p}\right\vert \geq \kappa n^{-\frac{p}{2}}\right) \right) 
\notag \\
&\leq &Cn^{-\gamma }\left\Vert F\right\Vert _{p}^{p}\text{ .}  \label{pivo}
\end{eqnarray}%
Because for $r\geq 1$, we have 
\begin{equation*}
n^{-\gamma }\leq n^{-\frac{1}{2}}\leq n^{\frac{-r}{2r+d}}\text{ ,}
\end{equation*}%
the result is proved.

Consider now \thinspace $p=+\infty $: we assume now $f\in \mathcal{B}%
_{\infty ,\infty }^{r}$, to obtain%
\begin{eqnarray*}
\mathbb{E}\left\Vert \widehat{f}-f\right\Vert _{\infty } &\leq &\mathbb{E}%
\left\Vert \sum_{j=0}^{J_{n}}\sum_{k=1}^{N_{j}}\left( w_{j;p}\widehat{\beta }%
_{jk}-\beta _{jk}\right) \psi _{jk}\right\Vert _{L^{\infty }\left( \mathbb{S}%
^{d}\right) }+\left\Vert \sum_{j>J_{n}}\sum_{k=1}^{N_{j}}\beta _{jk}\psi
_{jk}\right\Vert _{L^{\infty }\left( \mathbb{S}^{d}\right) } \\
&=&:I+II\text{ .}
\end{eqnarray*}%
As in \cite{bkmpAoSb}, \cite{dgm}, we have:%
\begin{equation*}
II=O\left( n^{-\frac{r}{2r+d}}\right) \text{ .}
\end{equation*}%
For what concerns $I$, we have instead 
\begin{equation*}
I\leq \sum_{j=0}^{J_{n}}\mathbb{E}\left\Vert \sum_{k=1}^{N_{j}}\left( w_{j;p}%
\widehat{\beta }_{jk}-\beta _{jk}\right) \psi _{jk}\right\Vert _{L^{\infty
}\left( \mathbb{S}^{d}\right) }\leq C\sum_{j=0}^{J_{n}}B^{j}\mathbb{E}\left[
\sup_{k}\left( w_{j;p}\widehat{\beta }_{jk}-\beta _{jk}\right) \right]
\end{equation*}%
\begin{eqnarray*}
&\leq &C\sum_{j=0}^{J_{n}}B^{j}\mathbb{E}\left[ \sup_{k}\left( \widehat{%
\beta }_{jk}-\beta _{jk}\right) \right] I\left( \left\vert
A_{js;p}\right\vert \geq \frac{\kappa n^{-\frac{p}{2}}}{2}\right) \\
&&+C\sum_{j=0}^{J_{n}}B^{j}\mathbb{E}\left[ \sup_{k}\left( \widehat{\beta }%
_{jk}-\beta _{jk}\right) I\left( \left\vert \widehat{A}_{js;p}-A_{js;p}%
\right\vert \geq \frac{\kappa n^{-\frac{p}{2}}}{2}\right) \right] \\
&&+C\sum_{j=0}^{J_{n}}B^{j}\sup_{k}\left\vert \beta _{jk}\right\vert \mathbb{%
E}\left[ I\left( \left\vert \widehat{A}_{js;p}-A_{js;p}\right\vert \geq
\kappa n^{-\frac{p}{2}}\right) \right] \\
&&+C\sum_{j=0}^{J_{n}}B^{j}\sup_{k}\left\vert \beta _{jk}\right\vert I\left(
\left\vert A_{js;p}\right\vert <2\kappa n^{-\frac{p}{2}}\right) \\
&=&Aa+Au+Ua+Uu\text{ .}
\end{eqnarray*}%
Again, we choose $J_{1,n}$ such that 
\begin{equation*}
B^{J_{1,n}}=\kappa ^{\prime }n^{\frac{1}{2r+d}}\text{ ; \ }I\left(
\left\vert A_{js;p}\right\vert \geq \frac{\kappa n^{-\frac{p}{2}}}{2}\right)
=0\text{ for }j>J_{1,n}\text{ ,}
\end{equation*}%
and, similarly to \cite{bkmpAoSb}, \cite{dgm}, we obtain 
\begin{eqnarray*}
Aa &\leq &CJ_{1,n}n^{-\frac{1}{2}}B^{J_{1,n}}\leq Cn^{-\frac{r}{2\left(
r+1\right) }}\text{ ;} \\
Uu &\leq &C\left\{ B^{-J_{1,n}\left( r+1\right) }+B^{-J_{1,n}}\right\} \leq
Cn^{-\frac{r}{2\left( r+1\right) }}\text{ .}
\end{eqnarray*}%
The other two terms $Au$ and $Ua$ are similar to the case previously
described. For general $\pi $ and $q$, we observe that $\mathcal{B}_{\pi
q}^{r}\subset \mathcal{B}_{\infty \infty }^{r^{\prime }}$, $r^{\prime
}=r-2/\pi $. Hence we obtain%
\begin{equation*}
\mathbb{E}\left\Vert \widehat{f}-f\right\Vert _{L^{\infty }\left( \mathbb{S}%
^{d}\right) }\leq CJ_{n}n^{-\frac{r^{\prime }}{2r^{\prime }+d}}=CJ_{n}n^{-%
\frac{r-d/\pi }{2\left( r-d\left( 1/\pi -1/2\right) \right) }}\text{ ,}
\end{equation*}%
as claimed.

\textbf{CASE\ II:} \emph{Sparse Case}

The proof follows the same procedure of the regular case. Indeed, recalling
that we have $\mathcal{B}_{\pi q}^{r}\subset \mathcal{B}_{pq}^{r-d\left( 
\frac{1}{\pi }-\frac{1}{p}\right) }$, we have%
\begin{equation*}
\mathbb{E}\left\Vert \widehat{f}-f\right\Vert _{L^{p}\left( \mathbb{S}%
^{d}\right) }^{p}
\end{equation*}

\begin{eqnarray*}
&\leq &2^{p-1}\mathbb{E}\left\Vert
\sum_{j=0}^{J_{n}}\sum_{s=1}^{S_{j}}\sum_{k\in R_{j;s}}\left( w_{js;p}%
\widehat{\beta }_{jk}-\beta _{jk}\right) \psi _{jk}\right\Vert _{L^{p}\left( 
\mathbb{S}^{d}\right) }^{p}+\left\Vert \sum_{j>J_{n}}\sum_{k=1}^{N_{j}}\beta
_{jk}\psi _{jk}\right\Vert _{L^{p}\left( \mathbb{S}^{d}\right) }^{p} \\
&=&:I+II\text{ .}
\end{eqnarray*}%
Also in this case, as in \cite{bkmpAoSb}, \cite{dgm}, because $r-d\left( 
\frac{1}{\pi }-\frac{1}{p}\right) \geq \left( r-d\left( \frac{1}{\pi }-\frac{%
1}{p}\right) \right) /2\left( r-d\left( \frac{1}{\pi }-\frac{1}{2}\right)
\right) $, we have for the bias term:%
\begin{equation*}
II=O\left( n^{-p\left( r-d\left( \frac{1}{\pi }-\frac{1}{p}\right) \right)
/2\left( r-d\left( \frac{1}{\pi }-\frac{1}{2}\right) \right) }\right) \text{
.}
\end{equation*}%
On the other hand, we split $I$ again into four terms as above. On one hand,
we obtain 
\begin{eqnarray*}
Au &\leq &\sum_{j\leq J_{n}}\sum_{s=1}^{S_{j}}\sum_{k\in R_{j;s}}\left\Vert
\psi _{jk}\right\Vert _{L^{p}\left( \mathbb{S}^{d}\right) }^{p}\left( 
\mathbb{E}\left[ \left\vert \widehat{\beta }_{jk}-\beta _{jk}\right\vert
^{2p}\right] \right) ^{\frac{1}{2}} \\
&&\times \left( \mathbb{P}\left( \left\vert \widehat{A}_{js;p}-A_{js;p}%
\right\vert \geq \frac{\kappa t_{n}^{p}}{2}\right) \right) ^{\frac{1}{2}} \\
Ua &\leq &\sum_{j\leq J_{n}}\sum_{s=1}^{S_{j}}\sum_{k\in R_{j;s}}\left\Vert
\psi _{jk}\right\Vert _{L^{p}\left( \mathbb{S}^{d}\right) }^{p}\left\vert
\beta _{jk}\right\vert ^{p}\left( \mathbb{P}\left( \left\vert \widehat{A}%
_{js;p}-A_{js;p}\right\vert \geq \kappa t_{n}^{p}\right) \right) \text{ ,}
\end{eqnarray*}%
whose upper bounds recall exactly the same procedure developed in regular
zone. On the other hand, consider initially:%
\begin{equation*}
Aa\leq Cn^{-\frac{p}{2}}\sum_{j\leq J_{n}}\sum_{s=1}^{S_{j}}\sum_{k\in
R_{j;s}}B^{jd\left( \frac{p}{2}-1\right) }I\left( \left\vert
A_{js;p}\right\vert \geq \frac{\kappa n^{-\frac{p}{2}}}{2}\right) \text{ .}
\end{equation*}%
In this case, we fix $J_{2n}$ so that 
\begin{equation*}
B^{J_{2n}}=O\left( n^{\frac{1}{2\left( r-d\left( \frac{1}{\pi }-\frac{1}{2}%
\right) \right) }}\right) \text{ , }I\left( \left\vert A_{js;p}\right\vert
\geq \frac{t_{n}^{p}}{2}\right) \equiv 0\text{ for }j\geq J_{2n}\text{ .}
\end{equation*}%
to obtain%
\begin{eqnarray}
Aa &\leq &Cn^{-\frac{p}{2}}\sum_{j\leq J_{2n}}\sum_{s=1}^{S_{j}}\sum_{k\in
R_{j;s}}B^{jd\left( \frac{p}{2}-1\right) }I\left( \left\vert
A_{js;p}\right\vert \geq \frac{t_{n}^{p}}{2}\right)  \label{Aaeqn} \\
&\leq &Cn^{-\frac{p}{2}}\sum_{j\leq J_{2n}}B^{jd\left( \frac{p}{2}-1\right)
}\sum_{s=1}^{S_{j}}\ell _{j}I\left( \left\vert A_{js;p}\right\vert \geq 
\frac{t_{n}^{p}}{2}\right)  \notag \\
&\leq &Cn^{-\frac{p}{2}}t_{n}^{-p}\sum_{j\leq J_{2n}}B^{jd\left( \frac{p}{2}%
-1\right) }B^{-prj}B^{-jdp\left( \frac{1}{2}-\frac{1}{\pi }\right) }  \notag
\\
&\leq &CB^{J_{2n}\left( -p\left( r-d\left( \frac{1}{\pi }-\frac{1}{p}\right)
\right) \right) }  \notag \\
&\leq &Cn^{-\frac{p\left( r-d\left( \frac{1}{\pi }-\frac{1}{p}\right)
\right) }{2\left( r-d\left( \frac{1}{\pi }-\frac{1}{2}\right) \right) }}%
\text{ .}  \notag
\end{eqnarray}%
where we used the inequality%
\begin{equation*}
\sum_{k=1}^{N_{j}}\left\vert \beta _{jk}\right\vert ^{p}\leq \left(
\sum_{k=1}^{N_{j}}\left\vert \beta _{jk}\right\vert ^{\pi }\right) ^{\frac{p%
}{\pi }}\text{ .}
\end{equation*}

Consider now%
\begin{eqnarray}
Uu &\leq &C\sum_{j\leq J_{n}}B^{jd\left( \frac{p}{2}-1\right) }\ell
_{j}\sum_{s=1}^{S_{j}}A_{js;p}I\left( \left\vert A_{js;p}\right\vert
<2t_{n}^{p}\right)  \label{Uu} \\
&=&C\sum_{j\leq J_{2n}}B^{jd\left( \frac{p}{2}-1\right) }\ell
_{j}\sum_{s=1}^{S_{j}}A_{js;p}I\left( \left\vert A_{js;p}\right\vert
<2t_{n}^{p}\right)  \notag \\
&+&\sum_{j=J_{2n}}^{J_{n}}B^{jd\left( \frac{p}{2}-1\right) }\ell
_{j}\sum_{s=1}^{S_{j}}A_{js;p}I\left( \left\vert A_{js;p}\right\vert
<2t_{n}^{p}\right)  \notag \\
&=&Uu_{1}+Uu_{2}\text{ .}  \notag
\end{eqnarray}%
As in \cite{bkmpAoSb}, \cite{dgm}, fix 
\begin{equation*}
m=\frac{dp\left( \frac{1}{2}-\frac{1}{p}\right) }{r-d\left( \frac{1}{\pi }-%
\frac{1}{2}\right) }\,\text{\ ,}
\end{equation*}%
so that%
\begin{eqnarray*}
p-m &=&p\frac{r-d\left( \frac{1}{\pi }-\frac{1}{p}\right) }{r-d\left( \frac{1%
}{\pi }-\frac{1}{2}\right) }>0\text{ ;} \\
m-\pi &=&\frac{\frac{d}{2}p-\pi \left( r+\frac{d}{2}\right) }{r-d\left( 
\frac{1}{\pi }-\frac{1}{2}\right) }>0\text{ .}
\end{eqnarray*}%
Furthermore, consider that the following implication holds 
\begin{equation*}
\left( \left\vert A_{js}\right\vert <2t_{n}^{p}\right) \rightarrow \forall
k\in R_{js},\text{ }\left\vert \beta _{jk}\right\vert ^{p}<2\ell
_{j}t_{n}^{p}\text{ ,}
\end{equation*}%
so that 
\begin{equation*}
\forall k\in R_{js},\text{ }\left\vert \beta _{jk}\right\vert ^{p-\pi
}<\left( 2\ell _{j}\right) ^{\frac{p-\pi }{p}}t_{n}^{p-\pi }\text{ .}
\end{equation*}%
Simple calculations lead to%
\begin{eqnarray*}
Uu_{1} &=&C\sum_{j\leq J_{2n}}B^{jd\left( \frac{p}{2}-1\right)
}\sum_{k=1}^{N_{j}}\left\vert \beta _{jk}\right\vert ^{p}I\left( \left\vert
A_{js}\right\vert <2t_{n}^{p}\right) \\
&\leq &C\sum_{j\leq J_{2n}}B^{jd\left( \frac{p}{2}-1\right)
}t\sum_{k=1}^{N_{j}}\left\vert \beta _{jk}\right\vert ^{\pi }\ell _{j}^{1-%
\frac{\pi }{p}}t_{n}^{p-\pi } \\
&\leq &Cn^{\frac{\pi -p}{2}}\sum_{j\leq J_{2n}}B^{j\eta d\left( 1-\frac{\pi 
}{p}\right) }B^{j\left( p\frac{d}{2}-\pi \left( \frac{d}{2}+r\right) \right)
} \\
&=&O\left( n^{-\frac{p\left( r+d\left( \frac{1}{p}-\frac{1}{\pi }\right)
\right) }{2\left( r+d\left( \frac{1}{2}-\frac{1}{\pi }\right) \right) }%
+\delta }\right)
\end{eqnarray*}

We have to study just the last term%
\begin{equation*}
Uu_{2}=C\sum_{j=J_{2n}}^{J_{n}}B^{j\eta \left( 1-\frac{\pi }{p}\right)
}B^{jd\left( \frac{p}{2}-1\right) }\sum_{k=1}^{N_{j}}\left\vert \beta
_{jk}\right\vert ^{p}I\left( \left\vert A_{js;p}\right\vert
<2t_{n}^{p}\right) \text{ .}
\end{equation*}

Analogously, we have%
\begin{eqnarray*}
Uu_{2} &=&C\sum_{j=J_{2n}}^{J_{n}}B^{jd\left( \frac{p}{2}-1\right)
}\sum_{k=1}^{N_{j}}\left\vert \beta _{jk}\right\vert ^{p}I\left( \left\vert
A_{js;p}\right\vert <2t_{n}^{p}\right) \\
&\leq &C\sum_{j=J_{2n}}^{J_{n}}B^{jd\left( \frac{p}{2}-1\right)
}\sum_{k=1}^{N_{j}}\left\vert \beta _{jk}\right\vert ^{m}t^{p-m}\ell _{j}^{1-%
\frac{m}{p}} \\
&\leq &Ct_{n}^{p-m}\ell _{J_{2n}}^{1-\frac{m}{p}%
}\sum_{j=J_{2n}}^{J_{n}}B^{jd\left( \frac{p}{2}-\frac{m}{2}\right) }\left(
\sum_{k=1}^{N_{j}}\left\vert \beta _{jk}\right\vert ^{m}B^{jd\left( \frac{m}{%
2}-1\right) }\right) \\
&\leq &Ct_{n}^{p-m}\ell _{J_{2n}}^{1-\frac{m}{p}%
}\sum_{j=J_{2n}}^{J_{n}}B^{jd\left( \frac{p}{2}-1\right) }B^{-mj\left(
r-d\left( \frac{1}{\pi }-\frac{1}{m}\right) \right) }
\end{eqnarray*}%
We can easily see that%
\begin{equation*}
\left( p-m\right) -m\left( r-d\left( \frac{1}{\pi }+\frac{1}{m}\right)
\right) =0\text{ .}
\end{equation*}%
Hence%
\begin{equation*}
Uu_{2}\leq C\ell _{J_{2n}}^{1-\frac{m}{p}}t_{n}^{p-n}=O\left( n^{-\frac{%
p\left( r+d\left( \frac{1}{p}-\frac{1}{\pi }\right) \right) }{2\left(
r-d\left( \frac{1}{\pi }-\frac{1}{2}\right) \right) }+\delta }\right) \text{
,}
\end{equation*}%
as claimed.

\section{Conclusions\label{sec:conclu}}

In this final Section we shall compare our results with those obtained by
similar procedures, involving needlets, in \cite{bkmpAoSb} and \cite{dgm}.

While in \cite{bkmpAoSb} the authors established minimax results on density
estimation by using local needlet thresholding (i.e., fixing a threshold for
each coefficients), in \cite{dgm} the authors attain the same minimax
results for the nonparametric regression problem on sections of spin $s$
fiber bundles defined on the sphere, which can be reduced to the scalar case
taking $s=0$ (for more details see \cite{dgm}). In both cases, the
convergence rates for the $L^{p}\left( \mathbb{S}^{d}\right) $-loss
functions assume the form 
\begin{equation*}
\sup_{f\in \mathcal{B}_{\pi q}^{r}\left( M\right) }\mathbb{E}\left\Vert 
\widehat{f}-f\right\Vert _{L^{p}\left( \mathbb{S}^{d}\right) }^{p}\leq
c_{p}\left( \log n\right) ^{p}\left( \frac{n}{\log n}\right) ^{-\alpha
_{1}\left( r,\pi ,p\right) }\text{ ,}
\end{equation*}%
where 
\begin{equation*}
\alpha _{1}\left( r,\pi ,p\right) =\left\{ 
\begin{array}{c}
\frac{rp}{2r+d}\text{ \ \ \ \ \ \ \ \ \ \ \ \ for }\pi \geq \frac{dp}{2r+d}
\\ 
\frac{p\left( r-d\left( \frac{1}{\pi }-\frac{1}{p}\right) \right) }{2\left(
r-d\left( \frac{1}{\pi }-\frac{1}{2}\right) \right) }\text{ for }\pi <\frac{%
dp}{2r+d}%
\end{array}%
\right. \text{ .}
\end{equation*}%
In the regular zone, the block thresholding rate we established is faster,
indeed the ratio with the local one is provided by 
\begin{equation*}
\frac{\left( \frac{n}{\log n}\right) ^{-\alpha _{1}\left( r,\pi ,p\right) }}{%
n^{-\alpha \left( r,\pi ,p\right) }}=O\left( \left( \log n\right) ^{\frac{rp%
}{2r+d}}\right) \text{ ;}
\end{equation*}%
on the other hand, in the sparse zone, we obtain worse results, because 
\begin{equation*}
\frac{\left( \frac{n}{\log n}\right) ^{-\alpha _{1}\left( r,\pi ,p\right) }}{%
n^{-\alpha \left( r,\pi ,p\right) }}=O\left( n^{-\delta }\left( \log
n\right) ^{\frac{p\left( r-d\left( \frac{1}{\pi }-\frac{1}{p}\right) \right) 
}{2\left( r-d\left( \frac{1}{\pi }-\frac{1}{2}\right) \right) }}\right) 
\text{ .}
\end{equation*}

This can be motivated by choice of the sample scaling factor $t_{n}$, fixed
to allow optimality in the regular zone. In the sparse zone this is not
possible also in view of the result in Lemma \ref{mainlemma}, where (\ref%
{P_A}) is proportional to $n^{-\gamma }$ and can not be improved. We indeed
recall that in \cite{bkmpAoSb} and in \cite{dgm} the corresponding
inequality, related just on a coefficient instead of a sum of them, follows
Bernstein inequality and, therefore, that probability decays as a negative
exponential. As already mentioned in the Introduction, the best performance
achieved by block thresholding in the regular zone can be explained by the
better trade-off between bias and variance. The latter is due to the
information provided by nearby coefficients, which allows the balance
between variance and bias to be "adaptively smoothed" along the curve, given
a suitable choice of the threshold $t_{n}$. On the other hand, the worse
results obtained in the sparse regions are due to the balance between the
choice of the size of threshold $t_{n}$ and the size of the block. Indeed,
given $t_{n}$, the probability inequality (\ref{P_A}) is suitable to attain
minimax rate in the regular zone if we choose blocks as described in (\ref%
{blokke}). Fixing a smaller size, as for instance $\left( \log N_{j}\right)
^{\gamma }$ (see \cite{hkp}), the convergence rate in the regular zone is
worsened. Our suggestion is to fix the block sizes which ensure the minimax
results in the regular zone; as explained in Remark \ref{remark}, this
warrants optimality in the most relevant case for practitioners, e.g., the
case of a quadratic loss function.

\begin{acknowledgement}
The author thanks Domenico Marinucci for useful discussions.
\end{acknowledgement}


\begin{thebibliography}{99}
\bibitem{autin} \textbf{Autin, F., Freyermuth, J.-M., von Sachs, R. (2012)},
Combining thresholding rules: a new way to improve the performance of
wavelet estimators, \emph{Journal of Nonparametric Statistics} 2012, 1--18,

\bibitem{bkmpAoS} \textbf{Baldi, P., Kerkyacharian, G., Marinucci, D. and
Picard, D. (2009) }Asymptotics for Spherical Needlets, \emph{Annals of
Statistics,} Vol. 37, No. 3, 1150-1171, arXiv: math.st/0606599

\bibitem{bkmpBer} \textbf{Baldi, P., Kerkyacharian, G., Marinucci, D. and
Picard, D. (2009)} Subsampling Needlet Coefficients on the Sphere, \emph{%
Bernoulli},\emph{\ }Vol. 15, 438-463,\emph{\ \ }arXiv: 0706.4169

\bibitem{bkmpAoSb} \textbf{Baldi, P., Kerkyacharian, G., Marinucci, D. and
Picard, D. (2009) }Adaptive Density Estimation for directional Data Using
Needlets, \emph{Annals of Statistics,} Vol. 37, No. 6A, 3362-3395, arXiv:
0807.5059

\bibitem{brownlow} \textbf{Brown, L.D., Low, M.G. (1996) }Asymptotic
Equivalence of Nonparametric Regression and White Noise, \emph{Annals of
Statistics}, 24, 6, 2384-2398

\bibitem{caibloc} \textbf{Cai, T.T. (1999) }Adaptive Wavelet Estimation : A
Block Thresholding and Oracle inequality Approach, \emph{Annals of Statistics%
}, 27, no.3, 898-924

\bibitem{caibloc2} \textbf{Cai, T.T. (2002) }On Block Thresholding in
Wavelet Regression: Adaptivity, Block Size, and Threshold Level. \textit{%
Statistica Sinica, }no. 12, pp. 1241-1273

\bibitem{caisilver} \textbf{Cai, T.T., Silverman, B.W. (2001) }Incorporating
information on neighboring coefficients into wavelet estimation. \emph{%
Sankhya Ser.} B 63 127--148. MR1895786

\bibitem{caizhou} \textbf{Cai, T.T., Zhou, H.H. (2009) }A data driven Block
Thresholding approach to wavelet estimation

\bibitem{cama} \textbf{Cabella, P., Marinucci, D. (2009)} Statistical
Challenges in the Analysis of Cosmic Microwave Background Radiation, Annals
of Applied Statistics, Vol.2, pp.61-95

\bibitem{chickencai} \textbf{Chicken, E., Cai, T.T. (2005)} Block
thresholding for density estimation: Local and global adaptivity. \emph{J.
Multivariate Anal.} 95 76--106.

\bibitem{dela08} \textbf{Delabrouille, J., Cardoso, J.-F., Le Jeune, M. ,
Betoule, M., Fay, G., Guilloux, F. (2008) }A Full Sky, Low Foreground, High
Resolution CMB Map from WMAP, \emph{Astronomy and Astrophysics}, Volume 493,
Issue 3, 2009, pp.835-857, arXiv 0807.0773

\bibitem{donoho1} \textbf{Donoho, D., Johnstone, I., Kerkyacharian, G.,
Picard, D. (1996) }Density estimation by wavelet thresholding, \emph{Annals
of Statistics}, 24, 508-539

\bibitem{donoho2} \textbf{Donoho, D., Johnstone, I. (1998), }Minimax
Estimation Via Wavelet Shrinkage, \emph{Annals of Statistics}, Vol. 26, No.
3, pp. 879-921

\bibitem{dgm} \textbf{Durastanti, C., Geller, D., Marinucci, D., (2011),}
Adaptive Nonparametric Regression of Spin Fiber Bundles on the Sphere, \emph{%
Journal of Multivariate Analysis}, Vol. 104, Issue 1, pp. 16-38

\bibitem{dlm1} \textbf{Durastanti, C., Lan, X., Marinucci, D., (2011),} 
\emph{Gaussian Semiparametric Estimates on the Unit Sphere}, to be published
on \emph{Bernoulli}.

\bibitem{dlm2} \textbf{Durastanti, C., Lan, X., Marinucci, D., (2011),} 
\emph{Needlet-Whittle Estimates on the Unit Sphere}, to be published on 
\emph{Electronic Journal of Statistics}

\bibitem{efroim} \textbf{Efroimovich S.Y. (1985) }Nonparametric estimation
of a density of unknown smoothness, \emph{Theory Probab. Appl. }30, 557-661

\bibitem{fay08} \textbf{Fa\"{y}, G., F. Guilloux, M. Betoule, J.-F. Cardoso,
J. Delabrouille, M. Le Jeune (2008),} CMB Power Spectrum Estimation Using
Wavelets, \emph{Physical Review D, } D78:083013,\emph{\ }arxiv 0807.1113

\bibitem{faytest} \textbf{Fa\"{y}, G.,Delabrouille, J., Kerkyacharian, G.,
Picard, D. (2011), }Testing the isotropy of high energy cosmic rays using
spherical needlets, arXiv 1107.5658\textbf{\ }

\bibitem{gelmar} \textbf{Geller, D. and Marinucci, D. (2010)} Spin Wavelets
on the Sphere, \emph{Journal of Fourier Analysis and its Applications}, n.6,
840-884, arXiv: 0811.2835

\bibitem{gelmar2010} \textbf{Geller, D. and Marinucci, D. (2011)} Mixed
Needlets, \emph{Journal of Mathematical Analysis and Applications}, n.375,
610-630.

\bibitem{gm1} \textbf{Geller, D. and Mayeli, A. (2009)} Continuous Wavelets
on Manifolds, \emph{Math. Z.,} Vol. 262, pp. 895-927, arXiv: math/0602201

\bibitem{gm2} \textbf{Geller, D. and Mayeli, A. (2009)} Nearly Tight Frames
and Space-Frequency Analysis on Compact Manifolds, \emph{Math. Z.,} Vol, 263
(2009), pp. 235-264, arXiv: 0706.3642

\bibitem{gm3} \textbf{Geller, D. and Mayeli, A.} \textbf{(2009)} Besov
Spaces and Frames on Compact Manifolds, \emph{Indiana Univ. Math. J.,} Vol.
58, pp. 2003-2042, arXiv:0709.2452.

\bibitem{WASA} \textbf{Hardle, W., Kerkyacharian, G. Picard, D., Tsybakov,
A. (1997) } \emph{Wavelets, Approximations and statistical application.}
Springer, Berlin

\bibitem{hkp} \textbf{Hall, P., Kerkyacharian, G. Picard, D. (1998) } Block
Threshold Rules for Curve Estimation using Kernel and Wavelet Methods\emph{,
Annals of Statistics}, 26, no. 3, 922-942

\bibitem{hkp2} \textbf{Hall, P., Kerkyacharian, G. Picard, D. (1999) }On the
Minimax Optimality of Block Thresholded Wavelet Estimators, \emph{Statistica
Sinica} 9, 33-49

\bibitem{Iuppa} \textbf{Iuppa, R., Di Sciascio G., Hansen, F.K, Marinucci,
D., Santonico R., }A needlet-based approach to the shower-mode data analysis
in the ARGO-YBJ experiment, \emph{Nuclear Instruments and Methods in Physics
Research Section A: Accelerators, Spectrometers, Detectors and Associated
Equipment, Vol. 692, Nov.2012 }

\bibitem{knp} \textbf{Kerkyacharian, G., Nickl, R., Picard, D. (2010) }%
Concentration inequalities and confidence bands for needlet density
estimators on compact homogeneous manifolds, \emph{Probab. Theory Relat.
Fields (2012) 153:363--404}

\bibitem{Kerkypicard} \textbf{Kerkyacharian G., Picard D. (2004) }Regression
in Random Design and Warped Wavelets, \emph{Bernoulli,} Vol. 10(6),
1053-1105.

\bibitem{kimkoo} \textbf{Kim, P.T., Koo, J.-Y. (2002) }Optimal Spherical
Deconvolution, \emph{Journal of Multivariate Analysis}, 80, 21-42

\bibitem{kim} \textbf{Kim, P.T., Koo, J.-Y., Luo, Z.-M. (2009)} Weyl
Eigenvalue Asymptotics and Sharp Adaptation on Vector Bundles, \emph{Journal
of Multivariate Analysis}, 100, 1962-1978

\bibitem{kookim} \textbf{Koo, J.-Y., Kim, P.T. (2008)} Sharp Adaptation for
Spherical Inverse Problems with Applications to Medical Imaging, \emph{%
Journal of Multivariate Analysis}, 99, 165-190

\bibitem{ejslan} \textbf{Lan, X. and Marinucci, D. (2008)} The Needlets
Bispectrum, \emph{Electronic Journal of Statistics}, Vol. 2, pp.332-367,
arXiv:0802.4020

\bibitem{spalan} \textbf{Lan, X. and Marinucci, D. (2008) }On the Dependence
Structure of Wavelet Coefficients for Spherical Random Fields, \emph{%
Stochastic Processes and their Applications}, 119, 3749-3766, arXiv:0805.4154

\bibitem{marpecbook} \textbf{Marinucci, D. and Peccati, G. (2011) }\emph{%
Random Fields on the Sphere. Representation, Limit Theorem and Cosmological
Applications}, Cambridge University Press.

\bibitem{mpbb08} \textbf{Marinucci, D., Pietrobon, D., Balbi, A., Baldi, P.,
Cabella, P., Kerkyacharian, G., Natoli, P. Picard, D., Vittorio, N., (2008) }%
Spherical Needlets for CMB Data Analysis, \emph{Monthly Notices of the Royal
Astronomical Society}, Volume 383, Issue 2, pp. 539-545, January 2008,
arXiv: 0707.0844

\bibitem{npw1} \textbf{Narcowich, F.J., Petrushev, P. and Ward, J.D. (2006a)}
Localized Tight Frames on Spheres, \emph{SIAM Journal of Mathematical
Analysis }Vol. 38, pp. 574--594\emph{\ }

\bibitem{npw2} \textbf{Narcowich, F.J., Petrushev, P. and Ward, J.D. (2006b)}
Decomposition of Besov and Triebel-Lizorkin Spaces on the Sphere, \emph{%
Journal of Functional Analysis, }Vol. 238, 2, 530--564

\bibitem{nourdinpeccati} \textbf{Nourdin, I., Peccati, G. (2012) }\emph{%
Normal Approximation with Maliavin Calculus}, Cambridge University Press

\bibitem{pbm06} \textbf{Pietrobon, D., Balbi, A., Marinucci, D. (2006) }%
Integrated Sachs-Wolfe Effect from the Cross Correlation of WMAP3 Year and
the NRAO VLA Sky Survey Data: New Results and Constraints on Dark Energy, 
\emph{Physical Review D, }id. D:74, 043524

\bibitem{pietrobon1} \textbf{Pietrobon, D., Amblard, A., Balbi, A., Cabella,
P., Cooray, A., Marinucci, D. (2008)} Needlet Detection of Features in WMAP
CMB Sky and the Impact on Anisotropies and Hemispherical Asymmetries, \emph{%
Physical Review D}, D78 103504, arXiv: 0809.0010

\bibitem{pietrobon2} \textbf{Pietrobon, D., Cabella, P., Balbi, A., de
Gasperis, G., Vittorio, N. (2009)} Constraints on Primordial non-Gaussianity
from a Needlet Analysis of the WMAP-5 Data, arXiv: 0812.2478, \emph{Monthly
Notices of the Royal Astronomical Society,} Volume 396, Issue 3, pp.
1682-1688

\bibitem{rudjord1} \textbf{Rudjord, O., Hansen, F.K., Lan, X., Liguori, M.,
Marinucci, D., Matarrese, S. (2009)} An Estimate of the Primordial
Non-Gaussianity Parameter $f_{NL}$ Using the Needlet Bispectrum from WMAP, 
\emph{Astrophysical Journal,} 701, 369-376, arXiv: 0901.3154

\bibitem{rudjord2} \textbf{Rudjord, O., Hansen, F.K., Lan, X., Liguori, M.
Marinucci, D., Matarrese, S. (2010) }Directional Variations of the
Non-Gaussianity Parameter $f_{NL}$, \emph{Astrophysical Journal}, Volume
708, Issue 2, pp. 1321-1325, arXiv: 0906.3232

\bibitem{tsybakov} \textbf{Tsybakov, A.B. (2009)} \emph{Introduction to
Nonparametric Estimation}, Springer, New York

\bibitem{vmk} \textbf{Varshalovich, D.A., Moskalev, A.N. and Khersonskii,
V.K. (1988) }\emph{Quantum Theory of Angular Momentum.} World Scientific,
Singapore
\end{thebibliography}
\end{document}